\documentclass[12pt,reqno]{amsart}
\usepackage{amsfonts}

\usepackage{amssymb,amsmath,graphicx,amsfonts,euscript}
\usepackage{color}

\setlength{\textheight}{9in} \setlength{\textwidth}{6.2in}
\setlength{\oddsidemargin}{0.2in} \setlength{\evensidemargin}{0.2in}
\setlength{\parindent}{0.2in} \setlength{\topmargin}{-0.1in}
\setcounter{section}{0} \setcounter{figure}{0}
\setcounter{equation}{0}

\newtheorem{thm}{Theorem}[section]

\newtheorem{Pros}[thm]{Proposition}
\newtheorem{define}[thm]{Definition}
\newtheorem{rem}[thm]{Remark}

\newtheorem{lemma}[thm]{Lemma}

\allowdisplaybreaks \voffset=-0.2in \numberwithin{equation}{section}

\begin{document}
\bigskip

\centerline{\Large\bf  Global well-posedness of the 2D Boussinesq
equations}
\smallskip

\centerline{\Large\bf  with fractional Laplacian dissipation}

\bigskip

\centerline{Zhuan Ye,\,\, Xiaojing Xu}

\bigskip

\centerline{School of Mathematical Sciences, Beijing Normal University,}
\medskip

\centerline{Laboratory of Mathematics and Complex Systems, Ministry of Education,}
\medskip

\centerline{Beijing 100875, People's Republic of China}

\medskip

\centerline{Ye's E-mail: \texttt{yezhuan815@126.com
}}
\centerline{Xu's E-mail: \texttt{xjxu@bnu.edu.cn}}

\bigskip
{\bf Abstract:}~~%
As a continuation of the previous work \cite{YX2015}, in this paper we focus on the Cauchy problem of the two-dimensional
(2D) incompressible Boussinesq equations with fractional Laplacian
dissipation. We give an elementary proof of the global regularity of
the smooth solutions of the 2D Boussinesq equations with a new range
of fractional powers of the Laplacian. The argument is based on the nonlinear lower bounds for the fractional Laplacian established in \cite{CV}. Consequently, this result significantly improves the
recent works \cite{CV,YJW,YX2015}.

{\vskip 1mm
 {\bf AMS Subject Classification 2010:}\quad 35Q35; 35B65; 76D03.

 {\bf Keywords:}
2D Boussinesq equations; Fractional Laplacian dissipation; Global
regularity.}

\vskip .3in
\section{Introduction}\label{intro}
In this paper, we are interested in studying the following 2D incompressible Boussinesq equations with fractional Laplacian dissipation
\begin{equation}\label{Bouss}
\left\{\aligned
&\partial_{t}u+(u \cdot \nabla) u+\nu\Lambda^{\alpha}u+\nabla p=\theta e_{2},\,\,\,\,\,x\in \mathbb{R}^{2},\,\,t>0, \\
&\partial_{t}\theta+(u \cdot \nabla) \theta+\kappa\Lambda^{\beta}\theta=0, \,\,\,\qquad\qquad x\in \mathbb{R}^{2},\,\,t>0,\\
&\nabla\cdot u=0, \,\,\,\,\,\qquad \qquad \qquad \qquad \qquad \quad x\in \mathbb{R}^{2},\,\,t>0,\\
&u(x, 0)=u_{0}(x),  \quad \theta(x,0)=\theta_{0}(x),\,\,\,\quad x\in \mathbb{R}^{2},
\endaligned\right.
\end{equation}
where the numbers $\nu\geq0$, $\kappa\geq0$, $\alpha\in [0,\,2]$ and $\beta\in [0,\,2]$ are real parameters. Here $u(x,\,t)=(u_{1}(x,\,t),\,u_{2}(x,\,t))$ is a vector field denoting
the velocity, $\theta=\theta(x,\,t)$ is a scalar function denoting
the temperature, $p$ is the scalar pressure and
$e_{2}=(0,\,1)$. The fractional Laplacian operator $\Lambda^{\alpha}$,
$\Lambda:=(-\Delta)^{\frac{1}{2}}$ denotes the Zygmund operator which is defined through the Fourier transform, namely $$\widehat{\Lambda^{\alpha}
f}(\xi)=|\xi|^{\alpha}\hat{f}(\xi),$$
where
$$\hat{f}(\xi)=\frac{1}{(2\pi)^{2}}\int_{\mathbb{{R}}^{2}}{e^{-ix\cdot\xi}f(x)\,dx}.$$
The fractional dissipation operator severs to model many physical phenomena (see \cite{DImbert}) in hydrodynamics and molecular biology such as anomalous diffusion in semiconductor growth (see \cite{PSW}).
We remark the convention that by $\alpha=0$ we mean that there is no dissipation in
$(\ref{Bouss})_{1}$, and similarly $\beta=0$ represents that there is no dissipation in
$(\ref{Bouss})_{2}$.

The standard Boussinesq equations (namely $\alpha=\beta=2$) are of relevance to study a number of models coming from atmospheric or oceanographic turbulence
where rotation and stratification play an important role (see for example \cite{MB,PG}). Moreover, as point out in \cite{MB}, the 2D inviscid Boussinesq equations, namely (\ref{Bouss}) with $\alpha=\beta=0$
are identical to the incompressible axi-symmetric (away from the $z$-axis) swirling 3D Euler equations.
There are geophysical circumstances in which the Boussinesq
equations with fractional Laplacian may arise. The effect of kinematic and thermal diffusion is attenuated by the thinning of atmosphere. This anomalous
attenuation can be modeled by using the space fractional Laplacian (see \cite{Cap,Gill}).

The global well-posedness of the 2D Boussinesq equations has
recently drawn a lot of attention  and many important results have
been established. It is well-known that the system (\ref{Bouss})
with full Laplacian dissipation (namely, $\alpha=\beta=2$) is global
well-posed, see, e.g., \cite{Can}.  In the case of inviscid
Boussinesq equations, the global regularity problem turns out to be
extremely difficult and remains outstandingly open. Therefore, it is
natural to consider the intermediate cases. Actually, many important
progress has recently been made on this direction. Almost at the
same time, Chae \cite{C1} and Hou and Li \cite{HL} proved the global
regularity for the system (\ref{Bouss}) when $\alpha=2$ and
$\beta=0$ or $\alpha=0$ and $\beta=2$ independently. Since then, much efforts
are devoted to the global regularity of (\ref{Bouss}) with the
smallest possible $\alpha\in (0,2)$ and $\beta\in (0,2)$. As pointed
out in \cite{JMWZ}, we can classify $\alpha$ and $\beta$ into three
categories: the subcritical case when $\alpha +\beta>1$, the
critical case when $\alpha +\beta=1$ and the supercritical case when
$\alpha +\beta<1$.  As a rule of thumb, with current methods it seems impossible to obtain
the global regularity for the 2D Boussinesq equations with supercritical dissipation.
Recently, Jiu, Wu and Yang \cite{JWYang} established the eventual
regularity of weak solutions of the system (\ref{Bouss}) when
$\alpha$ and $\beta$ are in the suitable supercritical range.
For the critical case, there are several works are available.
In the two elegant papers, Hmidi, Keraani and
Rousset \cite{HK3,HK4} established the global well-posedness result
to the system (\ref{Bouss}) with two special critical cases, namely
$\alpha=1$ and $\beta=0$ or $\alpha=0$ and $\beta=1$. The more general
critical case, that is $\alpha+\beta=1$ with $0<\alpha,\,\beta<1$ is
extremely difficult. The standard energy estimates do not yield the
global bounds in any Sobolev spaces when $\alpha$ and $\beta$ in the
critical case. Very recently,  the global regularity of the general
critical case $\alpha+\beta=1$ with
$\alpha>\frac{23-\sqrt{145}}{12}\thickapprox 0.9132$ and $0<\beta<1$
was recently examined by Jiu, Miao, Wu and Zhang \cite{JMWZ}. This
result was further improved by Stefanov and Wu \cite{SW} by further
enlarging the range of $\alpha$ with $\alpha+\beta=1$ and
$1>\alpha>\frac{\sqrt{1777}-23}{24}\thickapprox 0.7981$ and
$0<\beta<1$. Here we want to state that even in the subcritical
ranges, namely $\alpha+\beta>1$ with $0<\alpha<1$ and $0<\beta<1$,
the global regularity of (\ref{Bouss}) is also definitely nontrivial
and quite difficult. In fact, to the best of our knowledge there are
only several works concerning the subcritical cases. More precisely,
 Miao and Xue \cite{MX} obtained
the global regularity for system (\ref{Bouss}) for the case $\nu>0$, $\kappa>0$ and
$$\frac{6-\sqrt{6}}{4}<\alpha<1,\,\,\,1-\alpha<\beta<\min\Big\{\frac{7+2\sqrt{6}}{5}\alpha-2,\,\,
\frac{\alpha(1-\alpha)}{\sqrt{6}-2\alpha},\,\,\,2-2\alpha \Big\}.$$
In addition, Constantin and Vicol \cite{CV} verified the global regularity of the system (\ref{Bouss}) on the case when the thermal diffusion dominates, namely
$$\nu>0,\,\,\,\kappa>0,\,\,\,0<\alpha<2,\,\,\,0<\beta<2,\,\,\,\beta>\frac{2}{2+\alpha}.$$
Recently, Yang, Jiu and Wu \cite{YJW} proved the global well-posedness of the system (\ref{Bouss}) with
$$\nu>0,\,\,\,\kappa>0,\,\,\,0<\alpha <1,\,\,\,0< \beta<1,\,\,\,\beta>1-\frac{\alpha}{2},\,\,\,
\beta\geq\frac{2+\alpha}{3},\,\,\,\beta>\frac{10-5\alpha}{10-4\alpha}.$$
Very recently, the authors \cite{YXX} established  the global regularity for the 2D Boussinesq equations with a new range of fractional powers, namely $\nu>0,\,\kappa>0$ and
$$0.783\thickapprox\frac{21-\sqrt{217}}{8}<\alpha<1,\quad 1-\alpha<\beta<
\min\Big\{\frac{\alpha}{2},\,\,
\frac{(3\alpha-2)(\alpha+2)}{10-7\alpha},
\,\,\frac{2-2\alpha}{4\alpha-3}\Big\}.$$
Here we also want to mention that the two works \cite{CV,YJW} have been improved by the recent manuscript \cite{YX2015}. More precisely, the authors in \cite{YX2015} established the global regularity result for the 2D Boussinesq equations with
$$\nu>0,\,\,\,\kappa>0,\,\,\,0<\alpha <1,\,\,\,0< \beta<1,\,\,\,\beta>1-\frac{\alpha}{2},\,\,\,
\beta\geq\frac{2+\alpha}{3}.$$

The case of partial anisotropic dissipation has been considered in
several settings (see for instance \cite{ACSWXY,CW,DP3,LT,LLT}). For
the global smooth solutions to the damped Boussinesq equations with
small initial datum, we refer the readers to the recent works
\cite{ACWX,WXY}. Moreover, the global unique
solution of the Boussinesq equations for the Yudovich type data has
been established by many works, and we refer the readers to the
interesting works \cite{DP2,WuXue2012,XX2014,WuXu}. It is worth
remarking that there are several works concerning the global
regularity for the 2D Boussinesq equations with logarithmical dissipation (see, e.g., \cite{Hmidi2011,CW2,KRTW}). Many other interesting recent results on the Boussinesq equations can be found, with no intention to be complete (see, e.g.,
\cite{CKN,D,JMWZ,LaiPan,LLT,LWZ,XX,YZ} and the references therein).

The goal of this paper is to establish the global regularity of
solutions to the system (\ref{Bouss}) with a new range of fractional
powers of the Laplacian. Since the concrete values of the constant
$\nu,\,\kappa$ play no role in our discussion, for this reason, we
shall assume $\nu=\kappa=1$ throughout this paper. Now let us state
our main result as follows
\begin{thm}\label{Th1} Let $0<\alpha<1$ and $0<\beta<1$ satisfy
\begin{equation}\label{Condition}
\beta>\beta^{\ast}:=\left\{\aligned
&\max\Big\{\frac{2}{3},\,\,\frac{4-\alpha^{2}}{4+3\alpha}\Big\},\, \,\,\quad\qquad\qquad\quad 0<\alpha\leq \frac{2}{3},\\
&\frac{2-\alpha}{2},\qquad\qquad\qquad\qquad\qquad\qquad
\frac{2}{3}\leq \alpha<1.\\
\endaligned\right.
\end{equation}
Assume that $(u_{0}, \theta_{0})
\in H^{s}(\mathbb{R}^{2})\times H^{s}(\mathbb{R}^{2})$ for any $s>2$ and satisfies $\nabla\cdot u_{0}=0$.
Then the system (\ref{Bouss}) admits a unique global solution such that for any
given $T>0$
$$u\in C([0, T]; H^{s}(\mathbb{R}^{2}))\cap L^{2}([0, T]; H^{s+\frac{\alpha}{2}}(\mathbb{R}^{2})),$$
$$\theta\in C([0, T]; H^{s}(\mathbb{R}^{2}))\cap L^{2}([0, T];
H^{s+\frac{\beta}{2}}(\mathbb{R}^{2})).$$
\end{thm}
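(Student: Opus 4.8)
The plan is to establish a global-in-time a priori bound on a critical Sobolev norm of $(u,\theta)$, since local existence and the blow-up criterion (standard for this kind of system, e.g. control of $\|u\|_{H^s}+\|\theta\|_{H^s}$ follows once $\int_0^T(\|\nabla u\|_{L^\infty}+\|\nabla\theta\|_{L^\infty})\,dt<\infty$ or some weaker Besov version) reduce the theorem to such an estimate.

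\textbf{Step 1: Basic energy estimates.} First I would record the easy bounds: testing $(\ref{Bouss})_2$ against $\theta$ and using $\nabla\cdot u=0$ gives $\|\theta(t)\|_{L^p}\le\|\theta_0\|_{L^p}$ for all $p\in[2,\infty]$, and in particular $\theta\in L^\infty([0,\infty);L^2\cap L^\infty)$ with $\int_0^\infty\|\Lambda^{\beta/2}\theta\|_{L^2}^2\,dt<\infty$. Testing $(\ref{Bouss})_1$ against $u$ and using the forcing term $\theta e_2$ together with Gr\"onwall yields $u\in L^\infty([0,T];L^2)\cap L^2([0,T];\dot H^{\alpha/2})$ with a bound growing (at most linearly) in $T$. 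These are not enough by themselves because $\alpha<1$ is supercritical for the velocity equation alone.

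\textbf{Step 2: The key nonlinear lower bound and the vorticity/temperature coupling.} The heart of the proof is to control $\|\theta\|_{L^\infty}$-type quantities more carefully and to close an estimate for the vorticity $\omega=\nabla^\perp\cdot u$, which satisfies $\partial_t\omega+(u\cdot\nabla)\omega+\Lambda^\alpha\omega=\partial_{x_1}\theta$. Following the Constantin--Vicol strategy \cite{CV}, I would work with a high $L^q$ norm (or an Orlicz/pointwise argument on the temperature) and exploit the \emph{nonlinear lower bound} for the fractional Laplacian: at a point where $|f|$ is large relative to its $L^2$ mass on a ball, $\Lambda^\beta f\cdot f\gtrsim |f|^{2+\beta\cdot(\text{something})}/\|f\|_{L^2}^{\cdots}$. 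Applying this to $\theta$ (and possibly to $\omega$) converts the dissipative term into a superlinear damping that beats the forcing $\partial_{x_1}\theta$ after one pays the price of a Riesz-type commutator. The algebraic constraint (\ref{Condition}) — the two branches $\beta>\tfrac{4-\alpha^2}{4+3\alpha}$ and $\beta>\tfrac{2-\alpha}{2}$ — should emerge exactly from balancing the scaling exponents in this nonlinear-lower-bound inequality against the regularity gained from the $\Lambda^{\alpha/2}$ smoothing of $u$ (via maximal regularity / Besov estimates $\|u\|_{\dot B^{1+\alpha-\epsilon}_\infty}\lesssim\ldots$) and against the $L^2$-in-time bound on $\Lambda^{\beta/2}\theta$.

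\textbf{Step 3: Bootstrap to higher regularity.} Once an appropriate subcritical norm — say $\|\omega\|_{L^q}$ for a well-chosen $q$, or $\|\theta\|_{C^\delta}$ together with $\|u\|_{C^{1,\delta}}$ — is globally bounded, I would upgrade to $\|u\|_{H^s}+\|\theta\|_{H^s}$ by a standard commutator (Kato--Ponce) estimate combined with Gr\"onwall, and finally obtain the stated $L^2_T H^{s+\alpha/2}$ and $L^2_T H^{s+\beta/2}$ integrability from the dissipation terms. I expect the \textbf{main obstacle} to be Step 2: precisely, controlling the pressure/Riesz-transform contributions and the transport term in the $L^q$ (or pointwise) estimate so that the nonlinear lower bound of \cite{CV} can actually absorb them — this is where one must be careful about whether one estimates $\theta$, $\omega$, or a cleverly combined quantity, and it is the balancing in that argument that forces the exact exponent $\beta^\ast$ in (\ref{Condition}). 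A secondary technical point is making the nonlinear lower bound usable on $\mathbb R^2$ with only $L^2$ (not $L^1$) a priori control, which typically requires interpolating the localized $L^2$ mass against the global energy bound from Step 1.
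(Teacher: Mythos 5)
Your Steps 1 and 3 match the paper, and you are right that the crux is Step 2; but as written Step 2 has a genuine gap, and it mis-locates where the condition $\beta>\beta^{\ast}$ actually enters. You propose to run the Constantin--Vicol nonlinear-lower-bound/pointwise argument on $\theta$ (and $\omega$) directly, expecting the two branches of $\beta^{\ast}$ to come out of balancing exponents in that inequality against the $\Lambda^{\alpha/2}$ smoothing and the $L^2_t\dot H^{\beta/2}$ bound on $\theta$. That balancing, with only the energy-level information from Step 1 (i.e.\ $u\in L^\infty_t L^2$), reproduces at best the Constantin--Vicol threshold $\beta>\frac{2}{2+\alpha}$: in the maximum-principle estimate for $\omega$ the nonlinear damping is $\Omega^{2+\frac{\alpha r}{2+r}}/\|u\|_{L^r}^{\frac{\alpha r}{2+r}}$, and with $r=2$ the barrier argument closes only when $\beta>\frac{2}{2+\alpha}$, which is strictly weaker than the theorem. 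The pointwise argument cannot by itself produce the branch $\frac{4-\alpha^2}{4+3\alpha}$, nor $\frac{2-\alpha}{2}$.

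What is missing is the intermediate global a priori estimate at the $H^1$ level for the velocity, and this is precisely where $\beta>\beta^{\ast}$ is used in the paper. For $0<\alpha\le\frac23$ one cannot estimate $\omega$ directly (the vortex-stretching term $\partial_{x_1}\theta$ is not controllable); one must pass to the Hmidi--Keraani--Rousset-type combined quantity $G=\omega-\partial_{x_1}\Lambda^{-\beta}\theta$, whose equation trades $\partial_{x_1}\theta$ for $\Lambda^{\alpha-\beta}\partial_{x_1}\theta$ plus the commutator $[\mathcal R_\beta,u\cdot\nabla]\theta$, and then close a coupled $L^2$ estimate for $G$ together with $\Lambda^{\varrho}\theta$ ($0<\varrho<\frac\beta2$); the branch $\beta>\max\{\frac23,\frac{4-\alpha^2}{4+3\alpha}\}$ arises from the commutator estimates (the $[\mathcal R_\beta,u_G\cdot\nabla]\theta$ and $[\mathcal R_\beta,u_\theta\cdot\nabla]\theta$ pieces, handled with Lemmas \ref{NCE}--\ref{Lem24} and fractional Gagliardo--Nirenberg interpolation), not from the nonlinear lower bound. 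For $\frac23\le\alpha<1$ one instead couples $\|\omega\|_{L^2}$ with $\|\Lambda^{\delta}\theta\|_{L^2}$, and the interpolation $\|\Lambda^{1-\frac\alpha2}\theta\|_{L^2}\le\|\Lambda^\delta\theta\|_{L^2}^{1-\tau}\|\Lambda^{\delta+\frac\beta2}\theta\|_{L^2}^{\tau}$ forces $\beta>\frac{2-\alpha}{2}$. Only after these bounds give $\sup_t\|\omega\|_{L^2}<\infty$, hence $\sup_t\|u\|_{L^r}<\infty$ for every finite $r$, does the Constantin--Vicol barrier argument for $\|\nabla\theta\|_{L^\infty}$ and $\|\omega\|_{L^\infty}$ close, and there it needs only the milder condition $\beta>\frac{1}{1+\alpha}$ (implied by $\beta>\beta^{\ast}$), obtained by taking $r$ large. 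Without supplying this intermediate step (the combined quantity, the two-case splitting, and the specific commutator machinery), your outline does not reach the stated range of $(\alpha,\beta)$.
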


\vskip .1in
Let us give some remarks about our result.
\begin{rem}\rm
On the one hand, one can easily check that
$\frac{2}{2+\alpha}>\beta^{\ast}$, thus Theorem \ref{Th1}
significantly improves Theorem 6.1 of \cite{CV}. On the other hand,
our theorem also significantly improves Theorem 1.1 of \cite{YJW},
which obtained the global regularity result under the assumption
$$\beta>\max\Big\{\frac{2-\alpha}{2},\,\,\frac{2+\alpha}{3},\,\,
\frac{10-5\alpha} {10-4\alpha}\Big\}>\beta^{\ast}.$$ Finally, we
\cite{YX2015} have proved that the system (\ref{Bouss}) admits a
unique global solution provided
$$\beta>\frac{2-\alpha}{2}\quad \rm{and}\quad \beta\geq\frac{2+\alpha}{3}.$$
Obviously, Theorem \ref{Th1} significantly improves the result of
\cite{YX2015}.
\end{rem}
\begin{rem}\rm
Through the proof, we find that Theorem \ref{Th1} is always true for
$\beta>\frac{2-\alpha}{2}$ with any $0<\alpha<1$. However, it is
easy to check that when $0<\alpha<\frac{2}{3}$, it holds
$$\max\Big\{\frac{2}{3},\,\,\frac{4-\alpha^{2}}{4+3\alpha}\Big\}<\frac{2-\alpha}{2}.
$$
In fact, the proof of \textbf{Case 1} (below) is much complicated
than the proof of the case $\beta>\frac{2-\alpha}{2}$ with any
$0<\alpha<1$.
\end{rem}

\begin{rem}\rm
Through the proof of Theorem \ref{Th1}, we strongly believe that if
one may establish Lemma \ref{AZL302} under somewhat weaker
conditions than $\beta>\beta^{\ast}$, then Theorem \ref{Th1} can
also be improved. As suggested by Jiu, Miao, Wu and Zhang in
\cite{JMWZ}, the expected subcritical result is $\beta>1-\alpha$
($1-\alpha<\beta^{\ast}$). However, at the moment we are not able to
weaken the conditions $\beta>\beta^{\ast}$. We will investigate this issue
further in our future work.
\end{rem}
\begin{rem}\rm
The nonlinear lower bounds for the fractional Laplacian \cite{CV} or the
H$\rm \ddot{o}$lder estimates for advection
fractional-diffusion equations \cite{Sil20121} entails us that if one can show that for any given $T>0$
$$\sup_{0\leq t\leq T}\|u(t)\|_{C^{\alpha}}<\infty\quad \mbox{or}\quad
\sup_{0\leq t\leq T}\|\omega(t)\|_{L^{\frac{2}{1-\alpha}}}<\infty,$$
under the assumption $\beta>1-\alpha$, then the equations are well-posed in the smooth category up to time $T$. Here $w:=\nabla\times u=\partial_{x_{1}}u_{2}-\partial_{x_{2}}u_{1}$ is the vorticity and $C^{\alpha}$ stands for the classical H$\rm \ddot{o}$lder space.
\end{rem}

We outline the main idea in the proof of this theorem. A large
portion of the efforts are devoted to obtaining global {\it a
priori} bounds for $u$ and $\theta$ on the interval $[0,\,T]$.
According to the definition of $\beta^{\ast}$, the proof of Theorem
\ref{Th1} is divided into two cases, that is,
\begin{eqnarray}
&&\mbox{\textbf{Case 1}}:
0<\alpha\leq\frac{2}{3},\,\,\,\,\max\Big\{\frac{2}{3},\,\,\frac{4-\alpha^{2}}{4+3\alpha}\Big\}<\beta<1,\nonumber\\&& \mbox{\textbf{Case
2}}:
\frac{2}{3}\leq\alpha<1,\,\,\,\,\frac{2-\alpha}{2}<\beta<1.\nonumber
\end{eqnarray}
To start, let us say some words about the proof of the work \cite{YJW}, where the main idea of the work \cite{YJW} is to consider the combined quantity $G$ (see (\ref{t305}) for more details)
\begin{eqnarray}\label{Comb}\partial_{t}G+(u\cdot\nabla)
G+\Lambda^{\alpha}G=-[\mathcal {R}_{\beta},\,u\cdot\nabla]\theta
+\Lambda^{\alpha-\beta}\partial_{x_{1}}\theta.\end{eqnarray}
Here and in sequel, we have used the standard commutator notation
$$[\mathcal {R}_{\beta},\,u\cdot\nabla]\theta:=
\mathcal {R}_{\beta}(u\cdot\nabla\theta)-u\cdot\nabla\mathcal
{R}_{\beta}\theta.$$
Invoking some commutator estimates and some computations, the combined quantity $G$ satisfies
$$\sup_{0\leq t\leq T}\|G(t)\|_{L^{2}}^{2}+\int_{0}^{T}{\|\Lambda^{\frac{\alpha}{2}}G(\tau)\|_{L^{2}}^{2}
\,d\tau}<\infty,$$
which is true for $\beta>\frac{2-\alpha}{2} $
and $ \beta\geq\frac{\alpha+2}{3}$. \\
Then they show the estimate $$\sup_{0\leq t\leq T}\|G(t)\|_{L^{p_{0}}}<\infty$$ for some $p_{0}>2$. This estimate together with the iterative process leads to
$$\sup_{0\leq t\leq T}\|G(t)\|_{L^{p}}<\infty,\quad \mbox{for any}\,\,p_{0}\leq p<\infty,$$
which is valid for $\beta>1-\frac{\alpha}{2},\,\,\,
\beta\geq\frac{2+\alpha}{3}$ and
$\beta>\frac{10-5\alpha}{10-4\alpha}$.

However, the main argument used here is completely different from
the work \cite{YJW}. For \textbf{Case 1}, in view of several
commutator estimates, we can show by combining $L^{2}$-norm of the
combined quantity $G$ and the temperature $\theta$
\begin{eqnarray}\label{K01}\sup_{0\leq t\leq T}(\|G\|_{L^{2}}^{2}+\|\Lambda^{\varrho}\theta\|_{L^{2}}^{2})(t)
+\int_{0}^{T}{\big(\|\Lambda^{\frac{\alpha}{2}}G\|_{L^{2}}^{2}+\|\Lambda^{\varrho+
\frac{\beta}{2}}\theta\|_{L^{2}}^{2}\big)(\tau)\,d\tau}<\infty.\end{eqnarray}

whenever $0\leq\varrho<\frac{\beta}{2}$.

For \textbf{Case 2}, by combining $L^{2}$-norm of the vorticity
$\omega$ and the temperature $\theta$, one can conclude that
\begin{eqnarray}\label{AK01}\sup_{0\leq t\leq T}(\|\omega\|_{L^{2}}^{2}
+\|\Lambda^{\delta}\theta\|_{L^{2}}^{2})(t)
+\int_{0}^{T}{\big(\|\Lambda^{\frac{\alpha}{2}}\omega\|_{L^{2}}^{2}
+\|\Lambda^{\delta+
\frac{\beta}{2}}\theta\|_{L^{2}}^{2}\big)(\tau)\,d\tau}<\infty.\end{eqnarray}
whenever $0\leq\delta<\frac{\beta}{2}$.

The above two bounds (\ref{K01}) and (\ref{AK01}) are the key
component of this paper. With the help of the two bounds (\ref{K01})
and (\ref{AK01}), we will establish the following key global bound
$$\max_{0\leq t\leq T}\|u(t)\|_{L^{r}}<\infty$$ for
any $2\leq r<\infty$. Thanks to the nonlinear lower bounds for the
fractional Laplacian established in \cite{CV}, the following key
estimate holds
$$\max_{0\leq t\leq T}\|\nabla\theta (t)\|_{L^{\infty}}<\infty.$$
Finally, with the above estimate at our disposal, the global
regularity of $u$ and $\theta$ following a standard approach (see
for instance \cite{CKN,D,MB}).

\vskip .2in
The rest of the paper is organized as follows. In Section 2, we
obtain the {\it a priori} estimates for sufficiently smooth
solutions of the system (\ref{Bouss}). Section 3 is devoted to the
proof of Theorem \ref{Th1}. Finally, in the Appendix, we give the
proof of Lemmas \ref{NCE} and \ref{Lem23} for the sake of
completeness.

\vskip .4in
\section{ A priori estimates}\setcounter{equation}{0}
This section is devoted to the {\it a priori} estimates which can be viewed as a preparation for the proof of Theorem \ref{Th1}.
To simplify the notations, we shall use the letter $C$ to denote a generic constant which may vary from line to line. The dependence of $C$ on other parameters is usually clear from the context and we shall explicitly specify it whenever necessary.

The first lemma concerns the following commutator estimate, which
plays a key role in proving our main result. The proof can be performed by making use of the
Littlewood-Paley technique. To facilitate the reader, we will sketch the proof in
the Appendix.
\begin{lemma}\label{NCE}
Let $\frac{1}{p}=\frac{1}{p_{1}}+\frac{1}{p_{2}}$ with $p\in[2,
\infty)$ and $p_{1},\,p_{2}\in[2, \infty]$. Assume $r\in[1,
\infty]$, $\delta\in(0,1)$, $s\in(0, 1)$ such that $s+\delta<1$,
then it holds
\begin{align}\label{TtNCE}
\|[\Lambda^{\delta},f]g\|_{B_{p,r}^{s}}\leq
C(p,r,\delta,s)\big(\|\nabla f\|_{L^{p_{1}}}\|g\|_{
{B}_{p_{2},r}^{s+\delta-1}}+\|f\|_{L^{2}}\|g\|_{L^{2}}\big).
\end{align}
In particular,
\begin{align}\label{tNCE}
\|[\Lambda^{\delta},f]g\|_{B_{p,r}^{s}}\leq
C(p,r,\delta,s)\big(\|\nabla f\|_{L^{p}}\|g\|_{
{B}_{\infty,r}^{s+\delta-1}}+\|f\|_{L^{2}}\|g\|_{L^{2}}\big).
\end{align}
Here and in what follows, $B_{p,r}^{s}$ stands for the classical
Besov space (see appendix for its precise definition).
\end{lemma}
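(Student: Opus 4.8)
The plan is to prove the commutator estimate \eqref{TtNCE} via a Littlewood-Paley (Bony) paraproduct decomposition of the product $fg$, and then verify \eqref{tNCE} as the special case $p_1=p$, $p_2=\infty$ together with the embedding $\|\nabla f\|_{L^p}\gtrsim\|\nabla f\|$ being exactly what is needed. First I would write $[\Lambda^\delta,f]g=\Lambda^\delta(fg)-f\Lambda^\delta g$ and decompose, using the standard notation $T_f g$, $T_g f$, $R(f,g)$ for the three paraproduct pieces, so that $[\Lambda^\delta,f]g$ splits into a term where the commutator structure is "localised" (both factors at comparable frequencies or low-high), a term that is essentially $[\Lambda^\delta,T_{\cdot}]$ acting on one factor, and a remainder term. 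The key classical input is the commutator bound on a single dyadic block: for $\Delta_j$ the Littlewood-Paley projector, $\|[\Lambda^\delta,S_{j-1}f]\Delta_j g\|_{L^p}\lesssim 2^{j(\delta-1)}\|\nabla S_{j-1}f\|_{L^{p_1}}\|\Delta_j g\|_{L^{p_2}}$, which follows from writing the commutator as a convolution against $2^{j(\delta-1)}$ times an $L^1$-normalised kernel paired against $\nabla f$ via the mean value theorem (a Coifman--Meyer / Bahouri--Chemin--Danchin type estimate).

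Next I would assemble the Besov norm. For the "good" paraproduct pieces, multiplying the block estimate by $2^{js}$ and taking $\ell^r_j$ gives, after Hölder in $j$ and using that $\|\nabla S_{j-1} f\|_{L^{p_1}}\lesssim\|\nabla f\|_{L^{p_1}}$ uniformly in $j$,
\begin{equation}\nonumber
\big\|2^{js}\|[\Lambda^\delta,S_{j-1}f]\Delta_j g\|_{L^p}\big\|_{\ell^r_j}\lesssim \|\nabla f\|_{L^{p_1}}\big\|2^{j(s+\delta-1)}\|\Delta_j g\|_{L^{p_2}}\big\|_{\ell^r_j}=\|\nabla f\|_{L^{p_1}}\|g\|_{B^{s+\delta-1}_{p_2,r}},
\end{equation}
which is the first term on the right of \eqref{TtNCE}; here the hypotheses $s\in(0,1)$, $\delta\in(0,1)$, $s+\delta<1$ are used to guarantee the dyadic sums converge (the exponent $s+\delta-1<0$ controls the high-frequency tail, $s>0$ the low-frequency tail), and $p\geq2$ with $\frac1p=\frac1{p_1}+\frac1{p_2}$ makes Hölder in $x$ legitimate. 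The remaining pieces — the symmetric paraproduct $T_g f$ where $g$ sits at low frequency and $f$ at high frequency, and the diagonal remainder $R(f,g)$ — do not have the favourable commutator cancellation, so they must be bounded directly; here one does not gain the derivative on $f$ and instead absorbs everything crudely into $\|f\|_{L^2}\|g\|_{L^2}$, which is where the second, lower-order term in \eqref{TtNCE} comes from. Concretely, for these pieces $\|\Delta_j([\Lambda^\delta,f]g)\|_{L^p}$ is estimated by $2^{j\delta}$ times products of blocks of $f$ and $g$ measured in $L^2$; since only finitely many negative $j$ contribute nontrivially (for large $j$ the remainder term is handled by the same convergence as above once one observes $s<1$), summing against $2^{js}$ in $\ell^r$ produces a constant times $\|f\|_{L^2}\|g\|_{L^2}$, possibly after using Bernstein to trade the $L^2$ norms for $L^p$ on low frequencies.

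I expect the main obstacle to be the bookkeeping around the "bad" paraproduct and remainder terms: one has to be careful that the crude bound by $\|f\|_{L^2}\|g\|_{L^2}$ genuinely closes for all $j$, in particular that the high-frequency part of $R(f,g)$ and of $T_g f$ is still controlled — this is exactly where $s+\delta<1$ is indispensable, since it lets $2^{j(s+\delta)}$ be summed against the natural decay of the high-frequency blocks of $f$ and $g$ coming from $f,g\in L^2\hookrightarrow B^0_{2,\infty}$ (up to the usual harmless logarithmic care when $r<\infty$, handled by Hölder in $j$ against a geometric sequence). The particular case \eqref{tNCE} then requires no new work: set $p_1=p$ and $p_2=\infty$, note $\frac1p=\frac1p+0$, and the general estimate \eqref{TtNCE} specialises directly, with $\|g\|_{B^{s+\delta-1}_{\infty,r}}$ in place of $\|g\|_{B^{s+\delta-1}_{p_2,r}}$. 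I would relegate the detailed verification of the single-block commutator kernel estimate and the dyadic summations to the Appendix, as the statement of the lemma promises, and in the body of the paper only record the decomposition and the resulting bounds.
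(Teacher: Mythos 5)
Your decomposition and your treatment of the low--high piece $\sum_{|j-k|\le4}\Delta_k\big([\Lambda^{\delta},S_{j-1}f]\Delta_j g\big)$ coincide with the paper's: representing $\Lambda^{\delta}$ on an annulus as convolution with $2^{j(n+\delta)}\eta(2^{j}\cdot)$ and using the mean-value commutator bound (Proposition \ref{ProsA1}) gives $2^{j(\delta-1)}\|\nabla f\|_{L^{p_1}}\|\Delta_j g\|_{L^{p_2}}$, and the $\ell^{r}$ summation under $s>0$, $s+\delta<1$ yields the first term of \eqref{TtNCE}. The gap is in your handling of the other two pieces. You propose to bound the symmetric paraproduct (low-frequency $g$, high-frequency $f$) and the remainder ``crudely'' by $\|f\|_{L^{2}}\|g\|_{L^{2}}$, estimating each block by $2^{j\delta}$ times $L^{2}$ products of blocks and then summing against $2^{js}$. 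That intermediate bound is false: these pieces enter the $B^{s}_{p,r}$ norm with the weight $2^{j(s+\delta)}$, $s+\delta>0$, and $\sum_{j}2^{j(s+\delta)}\|\Delta_j f\|_{L^{2}}\|\widetilde{\Delta}_j g\|_{L^{2}}$ is not dominated by $\|f\|_{L^{2}}\|g\|_{L^{2}}$ (take $f,g$ concentrated on a single block at frequency $2^{N}$ and let $N\to\infty$). Neither $s+\delta<1$ nor $f,g\in L^{2}\hookrightarrow B^{0}_{2,\infty}$ produces any decay of this high-frequency tail, so your parenthetical ``handled by the same convergence as above'' does not apply once you have given up the derivative gain on $f$; the finitely-many-negative-$j$ observation is beside the point because the obstruction sits at large positive $j$.

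What the paper does, and what you need, is to keep the gain of one derivative on $f$ in these two pieces as well, exactly because there $f$ carries the high frequency. For $\sum_{|j-k|\le4}\Delta_k\big([\Lambda^{\delta},\Delta_j f]S_{j-1}g\big)$ the same Proposition \ref{ProsA1} gives $2^{k(\delta-1)}\|\Delta_k\nabla f\|_{L^{p_1}}\|S_{k-1}g\|_{L^{p_2}}$; expanding $S_{k-1}g=\sum_{l\le k-2}\Delta_l g$ and writing $2^{k(\delta-1)}=2^{(k-l)(\delta-1)}2^{l(\delta-1)}$, the hypothesis $s+\delta-1<0$ lets discrete Young close this sum into $\|\nabla f\|_{L^{p_1}}\|g\|_{B^{s+\delta-1}_{p_2,r}}$. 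For the remainder, the paper isolates the single low-frequency block $j=-1$ (the only contribution bounded by $\|f\|_{L^{2}}\|g\|_{L^{2}}$, and only for $-1\le k\le 3$), while for $j\ge0$ it estimates both halves of the commutator directly and uses Bernstein, $\|\Delta_j f\|_{L^{p_1}}\lesssim 2^{-j}\|\Delta_j\nabla f\|_{L^{p_1}}$, so the block bound is again $2^{j(\delta-1)}\|\nabla f\|_{L^{p_1}}\|\Delta_j g\|_{L^{p_2}}$ and the sum over $j\ge k-4$ converges against $2^{(k-j)s}$ because $s>0$. With that correction your argument becomes the paper's; the specialization $p_1=p$, $p_2=\infty$ giving \eqref{tNCE} is indeed immediate.
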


In order to prove \textbf{Case 2}, we shall use the next two commutator
estimates involving $\mathcal
{R}_{\beta}:=\partial_{x_{1}}\Lambda^{-\beta}$.
\begin{lemma}[see \cite{SW}]\label{Lem22}
Assume that $\frac{1}{2}<\beta<1$ and $1<p_{2}<\infty$,
$1<p_{1},\,p_{3}\leq\infty$ with
$\frac{1}{p_{1}}+\frac{1}{p_{2}}+\frac{1}{p_{3}}=1$. Then for $0\leq
s_{1}<1-\beta$ and $s_{1}+s_{2}>1-\beta$, the following holds true
\begin{align}\label{t202}
\Big|\int_{\mathbb{R}^{2}}{ F[\mathcal
{R}_{\beta},\,u_{G}\cdot\nabla]\theta\,dx}\Big|\leq
C\|\Lambda^{s_{1}}\theta\|_{L^{p_{1}}}\|F\|_{W^{s_{2},\,p_{2}}}
\|G\|_{L^{p_{3}}},\end{align} where
$u_{G}:=\nabla^{\perp}\Delta^{-1}G$ and $W^{s,\,p}$ denotes the
standard Sobolev space.
\end{lemma}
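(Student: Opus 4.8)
The plan is to expand the commutator $[\mathcal{R}_{\beta},u_G\cdot\nabla]\theta$ by Bony's paraproduct decomposition and to estimate its pairing against $F$ dyadic block by dyadic block. Two structural facts drive the argument. First, $\mathcal{R}_{\beta}=\partial_{x_1}\Lambda^{-\beta}$ is a Fourier multiplier of order $1-\beta\in(0,\tfrac12)$ whose symbol vanishes at the origin, so frequency-localizing it at $|\xi|\sim 2^{j}$ costs a factor $2^{(1-\beta)j}$, it commutes with $\nabla$, and it is bounded on low frequencies. Second, $u_G=\nabla^{\perp}\Delta^{-1}G$ is divergence free, so $u_G\cdot\nabla\theta=\nabla\cdot(u_G\theta)$ and $\nabla\cdot\Delta_j u_G=0$, while $\nabla u_G$ is a Calder\'on--Zygmund transform of $G$; in particular $\|\Delta_j u_G\|_{L^{p_3}}\lesssim 2^{-j}\|\Delta_j G\|_{L^{p_3}}$ and, for $1<p_3<\infty$, $\|\nabla S_j u_G\|_{L^{p_3}}\lesssim\|G\|_{L^{p_3}}$ (for $p_3=\infty$ one replaces the Riesz bound by Bernstein estimates on frequency-localized pieces, the resulting logarithmic factor being harmless). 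Accordingly I would split $[\mathcal{R}_{\beta},u_G\cdot\nabla]\theta$ into a low--high piece $\sum_{j}[\mathcal{R}_{\beta},S_{j-1}u_G\cdot\nabla]\Delta_j\theta$, a high--low piece $\sum_{j}[\mathcal{R}_{\beta},\Delta_j u_G\cdot\nabla]S_{j-1}\theta$, and a high--high (remainder) piece $\sum_{j}[\mathcal{R}_{\beta},\Delta_j u_G\cdot\nabla]\widetilde{\Delta}_j\theta$, plus a few purely low-frequency interactions.

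For the low--high piece one invokes the standard commutator estimate for a frequency-localized multiplier against a slowly varying coefficient: since the remaining factor sits at frequency $\sim 2^{j}$, the kernel of $\mathcal{R}_{\beta}$ restricted there contributes $2^{(1-\beta)j}$ and the commutator recovers one derivative $2^{-j}$, whence $\|[\mathcal{R}_{\beta},S_{j-1}u_G\cdot\nabla]\Delta_j\theta\|_{L^{p_2'}}\lesssim 2^{(1-\beta)j}\|\nabla S_{j-1}u_G\|_{L^{p_3}}\|\Delta_j\theta\|_{L^{p_1}}$. Pairing against $\widetilde{\Delta}_j F$ and using the blockwise bounds $\|\Delta_j\theta\|_{L^{p_1}}\lesssim 2^{-s_1 j}\|\Lambda^{s_1}\theta\|_{L^{p_1}}$ and $\|\widetilde{\Delta}_j F\|_{L^{p_2}}\lesssim 2^{-s_2 j}\|F\|_{W^{s_2,p_2}}$ bounds the $j$-th term by $2^{(1-\beta-s_1-s_2)j}\,\|\Lambda^{s_1}\theta\|_{L^{p_1}}\|F\|_{W^{s_2,p_2}}\|G\|_{L^{p_3}}$. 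The high--low piece carries no cancellation and needs none: estimating the two terms of the commutator separately, $\mathcal{R}_{\beta}$ again contributes $2^{(1-\beta)j}$, $\Delta_j u_G$ contributes $2^{-j}\|G\|_{L^{p_3}}$, and the low-frequency $\theta$-factor is summed via $\|\nabla S_{j-1}\theta\|_{L^{p_1}}\lesssim 2^{(1-s_1)j}\|\Lambda^{s_1}\theta\|_{L^{p_1}}$ and $\|\nabla\mathcal{R}_{\beta}S_{j-1}\theta\|_{L^{p_1}}\lesssim 2^{(2-\beta-s_1)j}\|\Lambda^{s_1}\theta\|_{L^{p_1}}$, the exponents being positive under our hypotheses on $s_1$ and $\beta$; pairing with $\widetilde{\Delta}_j F$ again gives $2^{(1-\beta-s_1-s_2)j}(\cdots)$. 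For the remainder piece the output frequency may be arbitrarily low, so before pairing with $F$ I would integrate by parts, using $\nabla\cdot\Delta_j u_G=0$ and $\mathcal{R}_{\beta}^{\ast}=-\mathcal{R}_{\beta}$, to transfer the gradient and the operator $\mathcal{R}_{\beta}$ onto $F$; the two contributions become $\int(\Delta_j u_G\cdot S_j\nabla\mathcal{R}_{\beta}F)\,\widetilde{\Delta}_j\theta\,dx$ and $\int(\Delta_j u_G\cdot S_j\nabla F)\,\mathcal{R}_{\beta}\widetilde{\Delta}_j\theta\,dx$, and then $\|S_j\nabla\mathcal{R}_{\beta}F\|_{L^{p_2}}\lesssim 2^{(2-\beta-s_2)j}\|F\|_{W^{s_2,p_2}}$, $\|S_j\nabla F\|_{L^{p_2}}\lesssim 2^{(1-s_2)j}\|F\|_{W^{s_2,p_2}}$ and $\|\mathcal{R}_{\beta}\widetilde{\Delta}_j\theta\|_{L^{p_1}}\lesssim 2^{(1-\beta-s_1)j}\|\Lambda^{s_1}\theta\|_{L^{p_1}}$ again produce $2^{(1-\beta-s_1-s_2)j}(\cdots)$ (one may assume $s_2<\min\{1,2-\beta\}$, since shrinking $s_2$ only weakens the hypothesis). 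Summing the three dyadic series --- the exponent $1-\beta-s_1-s_2$ being strictly negative by hypothesis --- gives $C\,\|\Lambda^{s_1}\theta\|_{L^{p_1}}\|F\|_{W^{s_2,p_2}}\|G\|_{L^{p_3}}$, which is the claim.

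I expect the work to concentrate in three places. First, the bookkeeping of $2^{j}$ powers in the remainder piece, where one must simultaneously Littlewood--Paley localize $F$ and, crucially, use $\nabla\cdot u_G=0$ to write $\Delta_j u_G\cdot\nabla(\cdots)=\nabla\cdot\big(\Delta_j u_G\,(\cdots)\big)$ so that no net derivative is lost when the product lands at low frequency. Second, the fact that the right-hand side carries only $\|\Lambda^{s_1}\theta\|_{L^{p_1}}$ and $\|F\|_{W^{s_2,p_2}}$ rather than their $\ell^{1}$-in-frequency (Besov) counterparts, so each dyadic block must be controlled by the \emph{full} norm and summability has to come entirely from the strict geometric gain $2^{(1-\beta-s_1-s_2)j}$ --- this is exactly what forces $s_1+s_2>1-\beta$, while the remaining hypotheses ($\tfrac12<\beta<1$, $0\le s_1<1-\beta$, $1<p_2<\infty$) guarantee that every auxiliary partial sum $\sum_{k<j}2^{\gamma k}$ stays geometric with $\gamma>0$, hence $=O(2^{\gamma j})$, and that the Calder\'on--Zygmund operators involved remain in their range of boundedness. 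Third, the endpoint exponents $p_1=\infty$ or $p_3=\infty$, where Riesz transforms are replaced by Bernstein inequalities on frequency-localized pieces and the ensuing logarithmic losses are absorbed by the geometric decay, together with the finitely many purely low-frequency interactions, which are handled directly from the boundedness of $\mathcal{R}_{\beta}$ there and the boundedness of $\nabla\Lambda^{-s_1}$, $\nabla\Lambda^{-s_2}$ on $|\xi|\lesssim 1$ (valid since $s_1,s_2<1$).
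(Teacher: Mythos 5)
First, note that the paper does not actually prove Lemma \ref{Lem22}: it is quoted from \cite{SW}, where it is established by a frequency-region (bilinear symbol) analysis that keeps the commutator cancellation throughout. Your paraproduct sketch is the physical-space counterpart of that argument, and for the genuinely high-frequency interactions your bookkeeping is correct: the low--high piece uses the commutator gain to replace $u_{G}$ by $\nabla u_{G}\sim G$, the high--low and remainder pieces close without cancellation, and every dyadic block carries the factor $2^{(1-\beta-s_{1}-s_{2})j}$, so summability is exactly the hypothesis $s_{1}+s_{2}>1-\beta$.

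The genuine gap is at the low-frequency end, which you dismiss as ``finitely many purely low-frequency interactions handled directly.'' They cannot be handled directly. The estimate is stated in terms of $\|G\|_{L^{p_{3}}}$, not $\|u_{G}\|$, and the operator $\Delta_{-1}\nabla^{\perp}\Delta^{-1}$ is \emph{not} bounded on $L^{p_{3}}$ (take $\widehat{G}$ concentrated at $|\xi|\sim\epsilon$: $\|\Delta_{-1}u_{G}\|_{L^{p_3}}\sim\epsilon^{-1}\|G\|_{L^{p_3}}$). So in the remainder piece at $j=-1$ (and, for $p_{3}=\infty$, already in $\nabla S_{j-1}u_{G}$) your bounds invoke a quantity that is not controlled by the right-hand side. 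Moreover, your fix of integrating by parts and estimating the two halves of the commutator separately destroys precisely the cancellation needed there, and the Proposition \ref{ProsA1}--type weighted-kernel commutator bound is not available either: the kernel $K$ of the low-frequency part of $\mathcal{R}_{\beta}$ decays only like $|x|^{-(3-\beta)}$, so $xK\notin L^{1}(\mathbb{R}^{2})$ when $\beta<1$. This is exactly why the paper's Lemma \ref{Lem23}, whose appendix proof abandons the commutator structure at low frequencies, pays for it with the extra lower-order term $\|u\|_{L^{r}}\|\theta\|_{L^{2}}$; Lemma \ref{Lem22} has no such term, so the low--low block must be treated with the cancellation intact. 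Concretely, on the Fourier side the block is governed by the symbol $\bigl[\tfrac{(\xi+\eta)_{1}}{|\xi+\eta|^{\beta}}-\tfrac{\eta_{1}}{|\eta|^{\beta}}\bigr]\tfrac{\xi^{\perp}\cdot\eta}{|\xi|^{2}|\eta|^{s_{1}}}$, and its boundedness in the regions $|\eta|\lesssim|\xi|\lesssim1$ and $|\xi+\eta|\ll|\eta|\sim|\xi|$ is where the hypothesis $s_{1}<1-\beta$ (together with $\xi^{\perp}\cdot\eta=\xi^{\perp}\cdot(\xi+\eta)$, i.e.\ the divergence-free structure) is actually consumed. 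It is telling that your sketch never uses $s_{1}<1-\beta$, only $s_{1}<1$: the assumption you do not use marks the part of the proof you have not done. To complete the argument you would need either this symbol analysis as in \cite{SW}, or a commutator estimate for the low-frequency part of $\mathcal{R}_{\beta}$ with a fractional (H\"older-type) gain of order larger than $\beta+s_{1}$ in place of the full derivative, so that the $|\xi|^{-1}$ loss in $u_{G}=\nabla^{\perp}\Delta^{-1}G$ is compensated without ever writing $\|\Delta_{-1}u_{G}\|_{L^{p_{3}}}$. The $p_{1}=\infty$, $p_{3}=\infty$ endpoints admitted by the statement raise the same low-frequency Riesz-transform issue and are not settled by your ``Bernstein plus harmless logarithm'' remark.
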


\begin{lemma}[see \cite{LSWXY}]\label{Lem23}
Let $\frac{1}{p_{1}}+\frac{1}{p_{2}}=\frac{1}{p}$ for any $2\leq
p_{1},\,p_{2}\leq\infty$ and $2\leq p<\infty$. Assume that
$0<\beta<2$ and $\nabla\cdot u=0$, then we have for any $r\in[1,\,\infty]$
\begin{align}\label{t203}
\|[\mathcal {R}_{\beta},\,u\cdot\nabla]\theta\|_{L^{p}}\leq
C(\|\nabla
u\|_{L^{p_{1}}}\|\theta\|_{B_{p_{2},1}^{1-\beta}}+
\|u\|_{L^{r}}\|\theta\|_{L^{2}}).\end{align}
\end{lemma}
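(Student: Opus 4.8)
The plan is to prove \eqref{t203} by a Bony paraproduct decomposition of the advection term, exploiting the divergence-free condition at every step and the kernel representation of commutators between a frequency-localized Fourier multiplier and a function. I would first use $\nabla\cdot u=0$ to write $u\cdot\nabla\theta=\partial_k(u_k\theta)$, decompose $u_k\theta$ à la Bony, and observe that $\mathcal R_\beta=\partial_{x_1}\Lambda^{-\beta}$ commutes with the Littlewood--Paley operators; moving the outer $\partial_k$ inside with the help of $\sum_k\partial_kS_{q-1}u_k=\sum_k\partial_k\Delta_qu_k=0$, one is led to
\begin{align}
[\mathcal R_\beta,u\cdot\nabla]\theta
&=\sum_{q}[\mathcal R_\beta,\,S_{q-1}u\cdot]\nabla\Delta_q\theta
+\sum_{q}[\mathcal R_\beta,\,\Delta_qu\cdot]\nabla S_{q-1}\theta
+\sum_{q}[\mathcal R_\beta,\,\Delta_qu\cdot]\nabla\widetilde\Delta_q\theta\nonumber\\
&=:\mathrm I+\mathrm{II}+\mathrm{III},\nonumber
\end{align}
where $\widetilde\Delta_q=\Delta_{q-1}+\Delta_q+\Delta_{q+1}$. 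The workhorse is the elementary estimate: if $P_q$ has symbol $m(\xi)\phi(2^{-q}\xi)$ with $m$ homogeneous of degree $\sigma$ and smooth on $\mathbb R^2\setminus\{0\}$ and $\phi\in C_c^\infty(\mathbb R^2\setminus\{0\})$, then from $P_q(ab)-aP_qb=\int h_q(\cdot-y)\big(a(y)-a(\cdot)\big)b(y)\,dy$, the expansion $a(y)-a(x)=\int_0^1(y-x)\cdot\nabla a(x+s(y-x))\,ds$, and $\big\||\cdot|\,h_q\big\|_{L^1}\lesssim 2^{q(\sigma-1)}$, one gets $\|P_q(ab)-aP_qb\|_{L^p}\lesssim 2^{q(\sigma-1)}\|\nabla a\|_{L^{p_1}}\|b\|_{L^{p_2}}$ whenever $\frac1p=\frac1{p_1}+\frac1{p_2}$.

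For $\mathrm I$, both $S_{q-1}u\cdot\nabla\Delta_q\theta$ and $\nabla\Delta_q\theta$ are spectrally supported in an annulus $|\xi|\sim2^q$, so $\mathrm I_q=[P_q,S_{q-1}u\cdot]\nabla\Delta_q\theta$ with $P_q$ a localization of $\mathcal R_\beta$ (order $\sigma=1-\beta$); hence $\|\mathrm I_q\|_{L^p}\lesssim2^{-q\beta}\|\nabla u\|_{L^{p_1}}\|\nabla\Delta_q\theta\|_{L^{p_2}}\lesssim 2^{q(1-\beta)}\|\nabla u\|_{L^{p_1}}\|\Delta_q\theta\|_{L^{p_2}}$, and $\ell^1$-summation in $q$ yields $\|\nabla u\|_{L^{p_1}}\|\theta\|_{B^{1-\beta}_{p_2,1}}$. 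In $\mathrm{II}$ the commutator cancellation is unavailable because $\mathcal R_\beta\nabla S_{q-1}\theta$ sits at low frequency, so I would bound the two pieces separately, using $\|\mathcal R_\beta\psi_q f\|_{L^p}\lesssim2^{q(1-\beta)}\|f\|_{L^p}$ for $\mathcal R_\beta(\Delta_qu\cdot\nabla S_{q-1}\theta)$ and $\|\mathcal R_\beta\nabla\Delta_{q'}\theta\|_{L^{p_2}}\lesssim 2^{q'(2-\beta)}\|\Delta_{q'}\theta\|_{L^{p_2}}$ (where $2-\beta>0$ is exactly what makes $\mathcal R_\beta\nabla$ harmless near $\xi=0$) for $\Delta_qu\cdot\mathcal R_\beta\nabla S_{q-1}\theta$; after writing $S_{q-1}\theta=\sum_{q'\le q-2}\Delta_{q'}\theta$, using $\|\Delta_qu\|_{L^{p_1}}\lesssim 2^{-q}\|\nabla u\|_{L^{p_1}}$, interchanging the double sum and summing the geometric series in $q$, $\mathrm{II}$ is again controlled by $\|\nabla u\|_{L^{p_1}}\|\theta\|_{B^{1-\beta}_{p_2,1}}$.

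The genuinely delicate term is $\mathrm{III}$, the remainder (high--high) interaction: here $\Delta_qu\cdot\nabla\widetilde\Delta_q\theta$ is spectrally supported in a \emph{ball} $|\xi|\lesssim 2^q$, so $\mathcal R_\beta$ acts near the frequency origin, where its symbol $\xi_1|\xi|^{-\beta}$ is merely Hölder when $\beta<1$ and unbounded when $\beta\ge1$; this is where $0<\beta<2$ must be used cleverly. The key is that $\nabla\cdot\Delta_qu=0$ allows us to write $\mathcal R_\beta(\Delta_qu\cdot\nabla\widetilde\Delta_q\theta)=\partial_{x_1}\partial_k\Lambda^{-\beta}(\Delta_qu_k\,\widetilde\Delta_q\theta)$, and $\partial_{x_1}\partial_k\Lambda^{-\beta}$ has \emph{positive} order $2-\beta$, hence no singularity at $\xi=0$. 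Decomposing the output dyadically, $\|\partial_{x_1}\partial_k\Lambda^{-\beta}(\Delta_qu_k\widetilde\Delta_q\theta)\|_{L^p}\lesssim\sum_{j\le q+4}2^{j(2-\beta)}\|\Delta_qu\|_{L^{p_1}}\|\widetilde\Delta_q\theta\|_{L^{p_2}}\lesssim 2^{q(2-\beta)}\|\Delta_qu\|_{L^{p_1}}\|\widetilde\Delta_q\theta\|_{L^{p_2}}$, the geometric sum converging precisely because $2-\beta>0$; the companion piece $\Delta_qu\cdot\mathcal R_\beta\nabla\widetilde\Delta_q\theta$ obeys the same bound. Since $2^q\|\Delta_qu\|_{L^{p_1}}\lesssim\|\nabla u\|_{L^{p_1}}$ for $q\ge0$, summation over $q\ge0$ gives the main term $\|\nabla u\|_{L^{p_1}}\|\theta\|_{B^{1-\beta}_{p_2,1}}$, while the single lowest-frequency block ($q=-1$), where the low frequencies of $u$ are not controlled by $\|\nabla u\|_{L^{p_1}}$, is estimated by Bernstein's inequality (this is where $2\le p,p_1,p_2$ enters) by $C\|u\|_{L^r}\|\theta\|_{L^2}$ --- precisely the second term of \eqref{t203}. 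Adding $\mathrm I,\mathrm{II},\mathrm{III}$ finishes the proof; the only real obstacle is the ball-localized half of $\mathrm{III}$, resolved by the divergence-free rewriting just described.
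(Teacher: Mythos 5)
Your proposal is correct and follows essentially the same route as the paper's appendix proof: the same Bony decomposition into two paraproducts plus a remainder, the kernel commutator estimate (Proposition \ref{ProsA1}) for the term with $S_{q-1}u$, and, for the high--high remainder, the same key device of using $\nabla\cdot u=0$ to rewrite $\mathcal{R}_{\beta}(\Delta_{q}u\cdot\nabla\widetilde{\Delta}_{q}\theta)$ as $\partial_{x_{1}}\Lambda^{-\beta}\nabla\cdot(\Delta_{q}u\,\widetilde{\Delta}_{q}\theta)$, so that only a multiplier of positive order $2-\beta>0$ acts near $\xi=0$, with the lone lowest-frequency block estimated via Bernstein/H\"older to produce the $\|u\|_{L^{r}}\|\theta\|_{L^{2}}$ term. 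The only cosmetic difference is that for the $(\Delta_{q}u,\,S_{q-1}\theta)$ paraproduct you bound the two halves of the commutator separately, whereas the paper reuses the kernel trick as in Lemma \ref{NCE}; both give the same $\ell^{1}$-summable bound, so nothing substantive changes.
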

For the sake of completeness, we will give the proof of Lemma
\ref{Lem23} in the Appendix.
\\

Finally, let us recall the following fractional type
Gagliardo-Nirenberg inequality which is due to
Hajaiej-Molinet-Ozawa-Wang \cite{HMOW}.
\begin{lemma}\label{Lem24}
Let $0<p,\,p_{0},\,p_{1},\,q,\,q_{0},\,q_{1}\leq\infty$,
$s,\,s_{0},\,s_{1}\in \mathbb{R}$ and $0\leq \vartheta\leq1$. Then
the following fractional type Gagliardo-Nirenberg inequality
\begin{align}\label{t204}
\|v\|_{\dot{B}_{p,q}^{s}(\mathbb{R}^{n})}\leq
C\|v\|_{\dot{B}_{p_{0},q_{0}}^{s_{0}}(\mathbb{R}^{n})}^{1-\vartheta}
\|v\|_{\dot{B}_{p_{1},q_{1}}^{s_{1}}(\mathbb{R}^{n})}^{\vartheta}\end{align}
holds for all $v\in \dot{B}_{p_{0},q_{0}}^{s_{0}}\cap
\dot{B}_{p_{1},q_{1}}^{s_{1}}$ if and only if
$$\frac{n}{p}-s=(1-\vartheta)\big(\frac{n}{p_{0}}-s_{0}\big)+\vartheta\big(\frac{n}{p_{1}}
-s_{1}\big),\qquad s\leq (1-\vartheta)s_{0}+\vartheta s_{1},$$
$$\frac{1}{q}\leq \frac{1-\vartheta}{q_{0}}+\frac{\vartheta}{q_{1}},\quad \mbox{  if}\,\,\,p_{0}\neq p_{1}\,\,\,\mbox{and}\,\,\,s=(1-\vartheta)s_{0}+\vartheta s_{1},$$
$$s_{0}\neq s_{1}\,\,\,\mbox{or}\,\,\,\frac{1}{q}\leq \frac{1-\vartheta}{q_{0}}+\frac{\vartheta}{q_{1}},\quad \mbox{  if}\,\,\,p_{0}= p_{1}\,\,\,\mbox{and}\,\,\,s=(1-\vartheta)s_{0}+\vartheta s_{1},$$
$$s_{0}-\frac{n}{p_{0}}\neq s-\frac{n}{p}\,\,\,\mbox{or}\,\,\,\frac{1}{q}\leq \frac{1-\vartheta}{q_{0}}+\frac{\vartheta}{q_{1}},\quad \mbox{  if}\,\,\,s<(1-\vartheta)s_{0}+\vartheta s_{1}.$$
A special consequence of (\ref{t204}) is the following bound
\begin{align}\label{t205}
\|v\|_{\dot{B}_{4,1}^{1-\beta}(\mathbb{R}^{2})}\leq
C\|v\|_{\dot{B}_{2,2}^{s}(\mathbb{R}^{2})}^{\lambda}
\|v\|_{\dot{B}_{\infty,\infty}^{0}(\mathbb{R}^{2})}^{1-\lambda},\quad
\lambda=\frac{2\beta-1}{2-2s},\end{align} where
$2-2\beta<s<\frac{3-2\beta}{2}$ with $\frac{1}{2}<\beta<1$.

We also have
\begin{eqnarray}\label{t206}
\|\Lambda^{\gamma\beta}v\|_{L^{\frac{1}{\gamma}}(\mathbb{R}^{2})}
\leq
C\|\Lambda^{\frac{\beta}{2}}v\|_{L^{2}(\mathbb{R}^{2})}^{2\gamma}
\|v\|_{L^{\infty}(\mathbb{R}^{2})}^{1-2\gamma},\quad \beta>0,\,\,\,
0<\gamma<\frac{1}{2}.
\end{eqnarray}
\end{lemma}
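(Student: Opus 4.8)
The plan is to prove the general inequality (\ref{t204}) by a Littlewood--Paley decomposition, reducing everything to the behaviour of the dyadic blocks $\Delta_j v$, and then to read off the two corollaries (\ref{t205}), (\ref{t206}) as (near-)instances. Throughout I write $\sigma=s-\frac{n}{p}$, $\sigma_i=s_i-\frac{n}{p_i}$ for the scaling indices, and set $A_j=2^{js_0}\|\Delta_j v\|_{L^{p_0}}$, $B_j=2^{js_1}\|\Delta_j v\|_{L^{p_1}}$, $C_j=2^{js}\|\Delta_j v\|_{L^{p}}$, so that $\|v\|_{\dot B^{s_0}_{p_0,q_0}}=\|(A_j)\|_{\ell^{q_0}}$, $\|v\|_{\dot B^{s_1}_{p_1,q_1}}=\|(B_j)\|_{\ell^{q_1}}$ and $\|v\|_{\dot B^{s}_{p,q}}=\|(C_j)\|_{\ell^{q}}$. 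The whole matter is to estimate $C_j$ by $A_j,B_j$ with the correct power of $2^j$, and then to sum in $j$.

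For sufficiency I separate two regimes. In the balanced regime, let $\tilde p$ be given by $\frac1{\tilde p}=\frac{1-\vartheta}{p_0}+\frac{\vartheta}{p_1}$; Hölder in $x$ gives $\|\Delta_j v\|_{L^{\tilde p}}\le\|\Delta_j v\|_{L^{p_0}}^{1-\vartheta}\|\Delta_j v\|_{L^{p_1}}^{\vartheta}$, and the regularity convexity $s\le(1-\vartheta)s_0+\vartheta s_1$ is exactly what forces $\tilde p\le p$, so Bernstein's inequality on $|\xi|\sim2^j$ yields $\|\Delta_j v\|_{L^p}\lesssim 2^{jn(\frac1{\tilde p}-\frac1p)}\|\Delta_j v\|_{L^{\tilde p}}$. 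The scaling hypothesis makes $n(\frac1{\tilde p}-\frac1p)=(1-\vartheta)s_0+\vartheta s_1-s$, so the powers of $2^j$ telescope and $C_j\lesssim A_j^{1-\vartheta}B_j^{\vartheta}$; taking $\ell^q$ and applying Hölder in $j$ under $\frac1q\le\frac{1-\vartheta}{q_0}+\frac{\vartheta}{q_1}$ gives (\ref{t204}). In the escape regime $\sigma_0\neq\sigma_1$ (equivalently $s_0-\frac n{p_0}\neq s-\frac np$) I instead use the single-endpoint Bernstein bounds $C_j\lesssim 2^{j(\sigma-\sigma_0)}A_j$ and $C_j\lesssim 2^{j(\sigma-\sigma_1)}B_j$ (in the admissible range of exponents). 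Since $\sigma$ lies strictly between $\sigma_0$ and $\sigma_1$, one exponent is negative and the other positive, so $C_j\lesssim\min\bigl(2^{j(\sigma-\sigma_0)}X,\,2^{j(\sigma-\sigma_1)}Y\bigr)$ with $X=\|v\|_{\dot B^{s_0}_{p_0,q_0}}$, $Y=\|v\|_{\dot B^{s_1}_{p_1,q_1}}$ decays geometrically on both sides of the crossover $j_\ast$; summing the two geometric tails gives $\|(C_j)\|_{\ell^q}\lesssim C_{j_\ast}\lesssim X^{1-\vartheta}Y^{\vartheta}$ for every $q$, with no restriction. This is precisely the mechanism behind the relaxed alternatives ``$s_0\neq s_1$'' (when $p_0=p_1$) and ``$s_0-\frac n{p_0}\neq s-\frac np$'' (when $s<(1-\vartheta)s_0+\vartheta s_1$).

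For necessity I would run the three standard dilation/modulation/lacunarity tests. The scaling identity follows by inserting $v(\lambda\cdot)$ and using $\|v(\lambda\cdot)\|_{\dot B^s_{p,q}}=\lambda^{s-n/p}\|v\|_{\dot B^s_{p,q}}$, which forces equality of the two $\lambda$-exponents. Testing a modulated bump $e^{iNx_1}\phi(x)$, whose homogeneous Besov norms grow like $N^{s}$ while its $L^{p_i}$ sizes stay bounded, forces $s\le(1-\vartheta)s_0+\vartheta s_1$. Finally, in the degenerate regime $\sigma_0=\sigma_1$ the blocks carry no geometric gain and the $\ell^q$ Hölder step is sharp; choosing a lacunary datum $v=\sum_k c_k\psi_{j_k}$ supported on a sparse frequency set and optimizing the coefficients $c_k$ shows that $\frac1q\le\frac{1-\vartheta}{q_0}+\frac{\vartheta}{q_1}$ cannot be dropped. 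Pinning down exactly when the $q$-condition is and is not needed — matching the sufficiency dichotomy above — is the step I expect to be the main technical obstacle.

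The corollaries follow quickly. Inequality (\ref{t205}) is the case $n=2$, $(p,q,s)=(4,1,1-\beta)$, $(p_0,q_0,s_0)=(2,2,s)$, $(p_1,q_1,s_1)=(\infty,\infty,0)$ and $\vartheta=1-\lambda$: the scaling identity reduces to $\lambda=\frac{2\beta-1}{2-2s}$, the hypothesis $s>2-2\beta$ makes the regularity inequality strict, and since then $s_0-\frac n{p_0}=s-1\neq\tfrac12-\beta=s-\frac np$ we sit in an escape case, so the endpoint index $q=1$ is admissible even though the $\ell^q$-Hölder condition itself fails. For (\ref{t206}) the $\ell^q$-summation is too lossy, so I argue directly through the square-function characterization of $L^{1/\gamma}$ (valid as $1/\gamma\in(2,\infty)$): the two block bounds $|\Delta_j v(x)|\le\|v\|_{L^\infty}$ and $|\Delta_j v(x)|\lesssim 2^{-j\beta/2}M(\Lambda^{\beta/2}v)(x)$, with $M$ the Hardy--Littlewood maximal operator, give after optimizing the dyadic sum at the crossover the pointwise estimate $\bigl(\sum_j 2^{2j\gamma\beta}|\Delta_j v|^2\bigr)^{1/2}\lesssim \|v\|_{L^\infty}^{1-2\gamma}\,M(\Lambda^{\beta/2}v)^{2\gamma}$; taking the $L^{1/\gamma}$ norm and invoking the $L^2$-boundedness of $M$ produces (\ref{t206}).
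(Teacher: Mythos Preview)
The paper does not prove this lemma at all: it is quoted verbatim from Hajaiej--Molinet--Ozawa--Wang \cite{HMOW} and only the two consequences (\ref{t205}), (\ref{t206}) are stated without argument. So there is no ``paper's proof'' to compare against; what you have written is substantially more than the paper itself supplies. Your overall scheme --- block-level H\"older plus Bernstein in the balanced case, and a high/low frequency split with geometric tails in the non-degenerate case --- is exactly the standard mechanism behind the cited result, and your verification that (\ref{t205}) lands in the ``escape'' branch (strict inequality $1-\beta<\lambda s$ from $s>2-2\beta$, and $s_0-\tfrac{n}{p_0}\neq s-\tfrac{n}{p}$ from $s<\tfrac{3-2\beta}{2}$) is correct.

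One genuine wrinkle in your escape argument: the single-endpoint bound $C_j\lesssim 2^{j(\sigma-\sigma_1)}B_j$ uses Bernstein $L^{p_1}\to L^{p}$, which requires $p\ge p_1$. In the very case (\ref{t205}) you care about, $p_1=\infty>4=p$, so that step fails as written. The fix is easy and you essentially have the ingredients: on the high-frequency side use the interpolated bound $\|\Delta_j v\|_{L^4}\le\|\Delta_j v\|_{L^2}^{1/2}\|\Delta_j v\|_{L^\infty}^{1/2}$ (log-convexity), which gives $C_j\lesssim 2^{j(1-\beta-s/2)}X^{1/2}Y^{1/2}$ with a strictly negative exponent, and on the low-frequency side keep the pure $L^2\to L^4$ Bernstein bound; optimizing the crossover $j_\ast$ then reproduces the exponent $\lambda$. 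So the gap is cosmetic, not structural.

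Your separate treatment of (\ref{t206}) is a genuine improvement over reading it off from (\ref{t204}): with the natural Besov identifications one is forced into the equality case $s=(1-\vartheta)s_0+\vartheta s_1$ with $p_0\neq p_1$, where the $q$-condition $\tfrac{1}{q}\le\tfrac{1-\vartheta}{q_0}+\tfrac{\vartheta}{q_1}$ becomes $\tfrac12\le\gamma$, which is false. Passing instead through the Littlewood--Paley square function (i.e.\ working in $\dot F^{\gamma\beta}_{1/\gamma,2}=L^{1/\gamma}$ for $1/\gamma\in(2,\infty)$) and using the pointwise block bounds $|\Delta_j v|\lesssim\|v\|_{L^\infty}$ and $|\Delta_j v|\lesssim 2^{-j\beta/2}M(\Lambda^{\beta/2}v)$ is exactly the right repair, and your computation is correct. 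The paper's phrasing ``We also have'' (as opposed to ``a special consequence'') quietly signals this; you have made the distinction explicit.
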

\begin{rem}\rm
Lemma \ref{Lem24} is also true in the nonhomogeneous framework.
\end{rem}

It follows from the basic energy estimates that the corresponding
solution $(u,\,\theta)$ of the system (\ref{Bouss}) obeys the
following global bounds.
\begin{lemma}\label{L301}
Assume $(u_{0},\,\theta_{0})$ satisfies the assumptions stated in Theorem \ref{Th1}.
Then the corresponding solution $(u, \theta)$
of (\ref{Bouss}) admits the following bounds for any $t>0$
\begin{eqnarray}\label{t301}
&&\|\theta(t)\|_{L^{2}}^{2}+\int_{0}^{t}{
\|\Lambda^{\frac{\beta}{2}}\theta(\tau)\|_{L^{2}}^{2}\,d\tau}\leq
\|\theta_{0}\|_{L^{2}}^{2},\nonumber\\&& \|\theta(t)\|_{L^{p}}\leq
\|\theta_{0}\|_{L^{p}},\quad \forall p\in [2, \infty],\nonumber\\&&
\|u(t)\|_{L^{2}}^{2}+\int_{0}^{t}{
\|\Lambda^{\frac{\alpha}{2}}u(\tau)\|_{L^{2}}^{2}\,d\tau}\leq
(\|u_{0}\|_{L^{2}}+t\|\theta_{0}\|_{L^{2}})^{2}.
\end{eqnarray}
\end{lemma}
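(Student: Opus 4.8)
The plan is to derive the three bounds in \eqref{t301} from the standard energy method applied to the system \eqref{Bouss} with $\nu=\kappa=1$. First I would work with sufficiently smooth solutions on $[0,T]$ so that all the manipulations below are justified; the bounds obtained are {\it a priori} in nature and, by the usual approximation/continuation argument, persist for the solution constructed in Theorem \ref{Th1}.

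\textbf{Temperature estimates.} I would begin with the $\theta$-equation $\partial_{t}\theta+(u\cdot\nabla)\theta+\Lambda^{\beta}\theta=0$. Taking the $L^{2}$ inner product with $\theta$ and using $\nabla\cdot u=0$ to kill the transport term, $\int (u\cdot\nabla\theta)\,\theta\,dx=\frac12\int u\cdot\nabla(\theta^{2})\,dx=0$, while $\int \Lambda^{\beta}\theta\,\theta\,dx=\|\Lambda^{\beta/2}\theta\|_{L^{2}}^{2}$ by Plancherel; integrating in time gives the first line of \eqref{t301}. For the $L^{p}$ bound, $2\le p<\infty$, I would test against $|\theta|^{p-2}\theta$: again the transport term vanishes by incompressibility, and the positivity property of the fractional Laplacian (the pointwise inequality $\int |\theta|^{p-2}\theta\,\Lambda^{\beta}\theta\,dx\ge 0$, valid for $0<\beta\le 2$, which follows from the C\'ordoba--C\'ordoba type pointwise estimate $|\theta|^{p-2}\theta\,\Lambda^{\beta}\theta\ge \tfrac1p\Lambda^{\beta}(|\theta|^{p})$) yields $\frac{d}{dt}\|\theta\|_{L^{p}}\le 0$; letting $p\to\infty$ covers $p=\infty$. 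Hence $\|\theta(t)\|_{L^{p}}\le\|\theta_{0}\|_{L^{p}}$ for all $p\in[2,\infty]$.

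\textbf{Velocity estimate.} Next I would take the $L^{2}$ inner product of the $u$-equation with $u$. The pressure term drops because $\int \nabla p\cdot u\,dx=-\int p\,\nabla\cdot u\,dx=0$, the transport term drops by incompressibility, and $\int \Lambda^{\alpha}u\cdot u\,dx=\|\Lambda^{\alpha/2}u\|_{L^{2}}^{2}\ge 0$. The forcing contributes $\int \theta e_{2}\cdot u\,dx\le\|\theta\|_{L^{2}}\|u\|_{L^{2}}\le\|\theta_{0}\|_{L^{2}}\|u\|_{L^{2}}$, using the first temperature bound. This gives $\frac{d}{dt}\|u\|_{L^{2}}\le\|\theta_{0}\|_{L^{2}}$, so $\|u(t)\|_{L^{2}}\le\|u_{0}\|_{L^{2}}+t\|\theta_{0}\|_{L^{2}}$; squaring and going back to integrate the dissipation term in time produces
$$\|u(t)\|_{L^{2}}^{2}+\int_{0}^{t}\|\Lambda^{\alpha/2}u(\tau)\|_{L^{2}}^{2}\,d\tau\le (\|u_{0}\|_{L^{2}}+t\|\theta_{0}\|_{L^{2}})^{2},$$
which is the third line of \eqref{t301}.

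There is essentially no serious obstacle here; the only point requiring a little care is the sign of the fractional-Laplacian contribution in the $L^{p}$ estimate for $\theta$, where one invokes the positivity lemma for $\Lambda^{\beta}$ with $0<\beta\le 2$ rather than a naive integration by parts. Everything else is the classical energy cascade, and the growth-in-time of the velocity bound is the expected and harmless feature coming from the buoyancy forcing $\theta e_{2}$.
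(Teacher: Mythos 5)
Your proof is correct and follows exactly the standard energy argument that the paper itself invokes without detail (it simply states that Lemma \ref{L301} "follows from the basic energy estimates"): $L^{2}$ testing of the $\theta$-equation, the C\'ordoba--C\'ordoba positivity of $\Lambda^{\beta}$ for the $L^{p}$ maximum principle, and the $L^{2}$ estimate for $u$ with the buoyancy term absorbed via $\|\theta(t)\|_{L^{2}}\le\|\theta_{0}\|_{L^{2}}$. Nothing further is needed.
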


\vskip .3in
\subsection{\textbf{Case 1}}
As well-known, when $0<\alpha,\,\beta<1$, it is impossible to obtain
the global $H^{1}$ bound of $(u,\,\theta)$ by direct energy estimate
method. Actually, applying operator $\mbox{curl}$ to the first
equation in $(\ref{Bouss})$, we have the following vorticity
$w=\nabla\times u$
\begin{eqnarray}\label{t303}
\partial_{t}w+(u\cdot\nabla)w+\Lambda^{\alpha} w=\partial_{x_{1}}\theta.
\end{eqnarray}
However, the "vortex stretching" term $\partial_{x_{1}}\theta$
appears to prevent us from proving any global bound for $w$. To
circumvent this difficulty, a natural idea would be to eliminate
$\partial_{x_{1}}\theta$ from the vorticity equation. To this end,
we generalize the idea of Hmidi, Keraani and Rousset \cite{HK3,HK4}
to introduce a new quantity. More precisely, we set the combined
quantity
$$G=\omega-\mathcal {R}_{\beta}\theta,\quad \mathcal {R}_{\beta}:=\partial_{x_{1}}\Lambda^{-\beta},$$
which obeys the following equation
\begin{eqnarray}\label{t305}\partial_{t}G+(u\cdot\nabla)
G+\Lambda^{\alpha}G=-[\mathcal {R}_{\beta},\,u\cdot\nabla]\theta
+\Lambda^{\alpha-\beta}\partial_{x_{1}}\theta.\end{eqnarray} Since
$u$ is determined by $\omega$ through the Biot-Savart law, we have
\begin{eqnarray}\label{t306}u=\nabla^{\perp}\Delta^{-1}\omega
=\nabla^{\perp}\Delta^{-1}(G+\mathcal
{R}_{\beta}\theta)=\nabla^{\perp}\Delta^{-1}G+\nabla^{\perp}\Delta^{-1}\mathcal
{R}_{\beta}\theta:=u_{G}+u_{\theta}.\end{eqnarray}

\vskip .1in
We are now in the position to derive the following estimates
concerning $G$ and $\theta$.
\begin{lemma}\label{AZL302}
Assume $(u_{0},\,\theta_{0})$ satisfies the assumptions stated in
Theorem \ref{Th1}. Let $(u, \theta)$ be the corresponding solution
of the system (\ref{Bouss}). If
$\beta>\max\Big\{\frac{2}{3},\,\,\frac{4-\alpha^{2}}{4+3\alpha}\Big\}$,
then the following estimate holds for any
$\max\Big\{ \frac{4-5\beta}{2},\,\,\frac{2+\alpha-3\beta}{2}\Big\}<\varrho<\frac{\beta}{2}$
and $t\in[0, T]$
\begin{eqnarray}\label{AZ001}
\|G(t)\|_{L^{2}}^{2}+\|\Lambda^{\varrho}\theta(t)\|_{L^{2}}^{2}
+\int_{0}^{t}{\big(\|\Lambda^{\frac{\alpha}{2}}G\|_{L^{2}}^{2}+\|\Lambda^{\varrho+
\frac{\beta}{2}}\theta\|_{L^{2}}^{2}\big)(\tau)\,d\tau}\leq
C(T,\,u_{0},\,\theta_{0}),
\end{eqnarray}
where $C(T,\,u_{0},\,\theta_{0})$ is a constant depending on $T$ and
the initial data.
\end{lemma}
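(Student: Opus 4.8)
The plan is to run a coupled energy estimate for $G$ in $L^2$ and $\Lambda^\varrho\theta$ in $L^2$, exploiting the dissipation $\Lambda^{\alpha/2}G$ and $\Lambda^{\varrho+\beta/2}\theta$ to absorb all nonlinear and forcing contributions, and then close by Gronwall. First I would take the $L^2$ inner product of the $G$-equation \eqref{t305} with $G$. Since $\nabla\cdot u=0$, the transport term drops, and the dissipative term yields $\|\Lambda^{\alpha/2}G\|_{L^2}^2$. This leaves two terms on the right: the commutator term $-\int [\mathcal R_\beta, u\cdot\nabla]\theta\,G\,dx$ and the lower-order forcing $\int \Lambda^{\alpha-\beta}\partial_{x_1}\theta\,G\,dx$. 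For the second, I would move the derivative around and bound it by $\|\Lambda^{1+\alpha-\beta}\theta\|_{L^2}\|G\|_{L^2}$ or, better, pair it with the $\Lambda^{\alpha/2}G$ dissipation via $\|\Lambda^{1+\alpha/2-\beta}\theta\|_{L^2}\|\Lambda^{\alpha/2}G\|_{L^2}$; this requires $\Lambda^{1+\alpha/2-\beta}\theta$ to be controlled by interpolation between $\|\theta\|_{L^2}$ and the $\Lambda^{\varrho+\beta/2}\theta$ dissipation, which forces a lower bound on $\varrho$. For the commutator term I would invoke Lemma \ref{Lem23} (with a suitable splitting $u=u_G+u_\theta$, estimating $\nabla u_G$ via $G$ and $\nabla u_\theta$ via $\theta$) together with the Besov embeddings and the Gagliardo–Nirenberg inequalities \eqref{t205}–\eqref{t206}, so that everything is expressed in terms of $\|G\|_{L^2}$, $\|\Lambda^{\alpha/2}G\|_{L^2}$, $\|\Lambda^{\varrho}\theta\|_{L^2}$, $\|\Lambda^{\varrho+\beta/2}\theta\|_{L^2}$ and the already-known bounds from Lemma \ref{L301}.

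Next I would apply $\Lambda^\varrho$ to the $\theta$-equation $(\ref{Bouss})_2$, take the $L^2$ inner product with $\Lambda^\varrho\theta$, and get $\tfrac{1}{2}\tfrac{d}{dt}\|\Lambda^\varrho\theta\|_{L^2}^2 + \|\Lambda^{\varrho+\beta/2}\theta\|_{L^2}^2 = -\int \Lambda^\varrho(u\cdot\nabla\theta)\,\Lambda^\varrho\theta\,dx$. The transport term is not zero after commuting with $\Lambda^\varrho$, so I would write it as a commutator $[\Lambda^\varrho, u\cdot\nabla]\theta$ plus $u\cdot\nabla\Lambda^\varrho\theta$ (the latter integrates to zero by incompressibility), and estimate the commutator using Lemma \ref{NCE} (the Besov commutator estimate with $\delta=\varrho$, $s$ chosen so that $s+\varrho<1$), again splitting $u$ into $u_G$ and $u_\theta$ pieces. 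The $u_G$ piece contributes $\|\nabla u_G\|_{L^{p_1}}\sim\|G\|_{L^{p_1}}$ times a Besov norm of $\theta$, and the $u_\theta$ piece is genuinely lower order because $u_\theta=\nabla^\perp\Delta^{-1}\mathcal R_\beta\theta$ gains regularity. All these pieces should be dominated, after Young's inequality, by a small multiple of the two dissipation terms plus $C(1+\|G\|_{L^2}^2+\|\Lambda^\varrho\theta\|_{L^2}^2)$ times an integrable-in-time factor built from $\|\Lambda^{\beta/2}\theta\|_{L^2}^2$.

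Adding the two estimates, choosing the small constants so the dissipation terms on the right are absorbed by the left, and then invoking Gronwall's inequality together with $\int_0^T\|\Lambda^{\beta/2}\theta\|_{L^2}^2\,d\tau<\infty$ from Lemma \ref{L301}, yields \eqref{AZ001}. The main obstacle is the bookkeeping of exponents: one must verify that the interpolation inequalities needed to turn $\|\Lambda^{1+\alpha/2-\beta}\theta\|_{L^2}$ and the various $\theta$-Besov norms into powers of $\|\Lambda^\varrho\theta\|_{L^2}$, $\|\Lambda^{\varrho+\beta/2}\theta\|_{L^2}$, $\|\theta\|_{L^2}$ are all admissible with exponent sums $\le 1$ in the dissipation norm, and this is precisely where the constraints $\beta>\max\{\tfrac23,\tfrac{4-\alpha^2}{4+3\alpha}\}$ and $\max\{\tfrac{4-5\beta}{2},\tfrac{2+\alpha-3\beta}{2}\}<\varrho<\tfrac{\beta}{2}$ enter. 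I would expect the $\tfrac{4-\alpha^2}{4+3\alpha}$ threshold to come out of balancing the commutator term from Lemma \ref{Lem23} against the $G$-dissipation, and the lower bounds on $\varrho$ to come from the forcing term $\Lambda^{\alpha-\beta}\partial_{x_1}\theta$ and from requiring $\varrho+\beta/2$ to dominate the order of the worst $\theta$-term appearing; carefully tracking these is the crux of the argument.
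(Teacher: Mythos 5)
Your overall architecture (coupled $L^2$ estimate for $G$ and $\Lambda^{\varrho}\theta$, the treatment of the forcing term $\Lambda^{\alpha-\beta}\partial_{x_1}\theta$ by pairing it with $\|\Lambda^{\frac{\alpha}{2}}G\|_{L^{2}}$ and interpolating, the commutator in the $\theta$-equation via Lemma \ref{NCE}, the $u_{\theta}$ piece of the $\mathcal{R}_{\beta}$-commutator via Lemma \ref{Lem23} together with (\ref{t205}), and the final Gronwall step) matches the paper. But there is a genuine gap in how you handle the $u_{G}$ piece of the commutator, which is exactly the place where the new threshold $\beta>\frac{4-\alpha^{2}}{4+3\alpha}$ must come from. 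You propose to estimate $\int[\mathcal{R}_{\beta},u_{G}\cdot\nabla]\theta\,G\,dx$ also by Lemma \ref{Lem23}, bounding $\|\nabla u_{G}\|$ by a Lebesgue norm of $G$; this produces a term of the schematic form $\|G\|_{L^{p_{1}}}\|\theta\|_{B_{p_{2},1}^{1-\beta}}\|G\|_{L^{2}}$, and the exponent bookkeeping for such a term is precisely the route of \cite{YJW,YX2015}, which only closes under the stronger conditions $\beta\geq\frac{2+\alpha}{3}$ (and worse); it cannot reach the range claimed in the lemma, since $\frac{4-\alpha^{2}}{4+3\alpha}$ is strictly below $\frac{2+\alpha}{3}$ on the relevant interval. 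So under your plan the estimate would not close for all $\beta>\max\{\frac{2}{3},\frac{4-\alpha^{2}}{4+3\alpha}\}$, and the asserted origin of that threshold ("balancing the commutator term from Lemma \ref{Lem23} against the $G$-dissipation") is not how it arises.

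The missing idea is the trilinear commutator estimate of Stefanov--Wu, Lemma \ref{Lem22}, combined with a genuinely space-time ingredient: from (\ref{t206}) and the basic energy bound (\ref{t301}) one has $\|\Lambda^{\gamma\beta}\theta\|_{L_{t}^{1/\gamma}L_{x}^{1/\gamma}}<\infty$ for $0<\gamma<\frac12$ (see (\ref{Vo311})). Applying (\ref{t202}) with $F=G$, $s_{1}=\gamma\beta$, $p_{1}=\frac{1}{\gamma}$ and suitable $s_{2},p_{2},p_{3}$, the dangerous term is bounded by $\|\Lambda^{\gamma\beta}\theta\|_{L^{1/\gamma}}\|\Lambda^{\frac{\alpha}{2}}G\|_{L^{2}}^{1+\mu}\|G\|_{L^{2}}^{1-\mu}$, which after Young's inequality leaves a Gronwall factor $\|\Lambda^{\gamma\beta}\theta\|_{L^{1/\gamma}}^{1/\gamma}$ that is integrable in time; the compatibility of all the constraints on $\gamma,s_{1},s_{2}$ (including $s_{1}<1-\beta$, $s_{1}+s_{2}>1-\beta$, $s_{2}<\frac{\alpha}{2}$, and $\frac{2}{1-\mu}\leq\frac{1}{\gamma}$) is exactly what forces $\beta>\frac{4-\alpha^{2}}{4+3\alpha}$. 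Without this step your argument at best proves the lemma under more restrictive hypotheses than stated. A minor further remark: in the $\theta$-equation estimate the paper does not split $u=u_{G}+u_{\theta}$; it simply uses $\|\nabla u\|_{L^{2}}\lesssim\|\omega\|_{L^{2}}\leq\|G\|_{L^{2}}+\|\mathcal{R}_{\beta}\theta\|_{L^{2}}$ with $\|\mathcal{R}_{\beta}\theta\|_{L^{2}}\leq\|\theta\|_{H^{\beta/2}}$ (this is where $\beta\geq\frac{2}{3}$ is used), so your splitting there is unnecessary though harmless.
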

\begin{rem}\rm
Although the above estimate (\ref{AZ001}) holds for $\max\Big\{\frac{4-5\beta}{2},\,\,
\frac{2+\alpha-3\beta}{2}\Big\}<\varrho<\frac{\beta}{2}$, yet
by energy estimate (\ref{t301}) and the classical interpolation, we
find that (\ref{AZ001}) is true for any
$0\leq\varrho<\frac{\beta}{2}$.
\end{rem}
\begin{proof}[\textbf{Proof of Lemma \ref{AZL302}}]
Applying $\Lambda^{\varrho}$ to $(\ref{Bouss})_{2}$ and taking the
inner product with $\Lambda^{\varrho}\theta$, we obtain
\begin{eqnarray}\label{AZ002}\frac{1}{2}\frac{d}{dt}\|\Lambda^{\varrho}\theta\|_{L^{2}}^{2}
+\|\Lambda^{\varrho+\frac{\beta}{2}}\theta\|_{L^{2}}^{2}=-\int_{\mathbb{R}^{2}}
\Lambda^{\varrho}\big(u \cdot
\nabla\theta\big)\Lambda^{\varrho}\theta\,dx.
\end{eqnarray}
Hence, an application of the divergence-free condition and
commutator estimate (\ref{tNCE}) directly yields
\begin{eqnarray}\label{Fi01}\Big|\int_{\mathbb{R}^{2}}
\Lambda^{\varrho}\big(u \cdot
\nabla\theta\big)\Lambda^{\varrho}\theta\,dx\Big|
&=&\Big|\int_{\mathbb{R}^{2}}
[\Lambda^{\varrho}, u \cdot \nabla]\theta\,\,\Lambda^{\varrho}\theta\,dx\Big|\nonumber\\
&=&\Big|\int_{\mathbb{R}^{2}}
\nabla\cdot[\Lambda^{\varrho}, u]\theta\,\,\Lambda^{\varrho}\theta\,dx\Big|\nonumber\\
&\leq& C\|\Lambda^{1-\frac{\beta}{2}}[\Lambda^{\varrho}, u]\theta\|_{L^{2}}\|\Lambda^{\varrho+\frac{\beta}{2}}\theta\|_{L^{2}}\nonumber\\
&\leq& C\|[\Lambda^{\varrho}, u ]\theta\|_{H^{1-\frac{\beta}{2}}}\|\Lambda^{\varrho+\frac{\beta}{2}}\theta\|_{L^{2}}\nonumber\\
&\leq& C\|[\Lambda^{\varrho}, u
]\theta\|_{B_{2,2}^{1-\frac{\beta}{2}}}\|\Lambda^{\varrho+\frac{\beta}{2}}\theta\|_{L^{2}}
\nonumber\\
&\leq& C(\|\nabla
u\|_{L^{2}}\|\theta\|_{B_{\infty,2}^{\varrho-\frac{\beta}{2}}}
+\|u\|_{L^{2}}\|\theta\|_{L^{2}})
\|\Lambda^{\varrho+\frac{\beta}{2}}\theta\|_{L^{2}}\qquad \Big(\varrho<\frac{\beta}{2}\Big)\nonumber\\
&\leq&
C(\|\omega\|_{L^{2}}\|\theta\|_{L^{\infty}}+\|u\|_{L^{2}}\|\theta\|_{L^{2}})
\|\Lambda^{\varrho+\frac{\beta}{2}}\theta\|_{L^{2}}
\nonumber\\
&\leq& C(\|G\|_{L^{2}}\|\theta\|_{L^{\infty}}+\|\mathcal
{R}_{\beta}\theta\|_{L^{2}}\|\theta\|_{L^{\infty}}+\|u\|_{L^{2}}\|\theta\|_{L^{2}})
\|\Lambda^{\varrho+\frac{\beta}{2}}\theta\|_{L^{2}}
\nonumber\\
&\leq&
\frac{1}{4}\|\Lambda^{\varrho+\frac{\beta}{2}}\theta\|_{L^{2}}^{2}+
C(1+\|G\|_{L^{2}}^{2}+\|\theta\|_{H^{\frac{\beta}{2}}}^{2}),
\end{eqnarray}
where we have used the following facts
$$\|f\|_{H^{1-\frac{\beta}{2}}}\approx\|f\|_{B_{2,2}^{1-\frac{\beta}{2}}}\,\,\,\mbox{and}\,\,\,
\|\mathcal {R}_{\beta}\theta\|_{L^{2}}\leq
C\|\theta\|_{H^{\frac{\beta}{2}}},\,\,\beta\geq\frac{2}{3}.$$
Inserting the above estimate in (\ref{AZ002}), we thus obtain
\begin{eqnarray}\label{Vor02}\frac{1}{2}\frac{d}{dt}\|\Lambda^{\varrho}\theta\|_{L^{2}}^{2}
+\frac{3}{4}\|\Lambda^{\varrho+\frac{\beta}{2}}
\theta\|_{L^{2}}^{2}\leq
 C(1+\|G\|_{L^{2}}^{2}+\|\theta\|_{H^{\frac{\beta}{2}}}^{2}).
\end{eqnarray}
In order to close the above inequality, we need to consider the
equation (\ref{t305}). To this end, we multiply the equation (\ref{t305}) by $G$
to obtain
\begin{eqnarray}\label{Vo308}\frac{1}{2}\frac{d}{dt}\|G\|_{L^{2}}^{2}
+\|\Lambda^{\frac{\alpha}{2}}G\|_{L^{2}}^{2}&=&\int_{\mathbb{R}^{2}}
\Lambda^{\alpha-\beta}\partial_{x_{1}}\theta \,\,G\,dx\ -
\int_{\mathbb{R}^{2}}
[\mathcal {R}_{\beta},\,u\cdot\nabla]\theta \,\,G\,dx\nonumber\\
&=& \int_{\mathbb{R}^{2}}
\Lambda^{\alpha-\beta}\partial_{x_{1}}\theta \,\,G\,dx\ -
\int_{\mathbb{R}^{2}} [\mathcal
{R}_{\beta},\,u_{G}\cdot\nabla]\theta
\,\,G\,dx\nonumber\\&&-\int_{\mathbb{R}^{2}} [\mathcal
{R}_{\beta},\,u_{\theta}\cdot\nabla]\theta \,\,G\,dx.
\end{eqnarray}
Bounding the first term at the R-H-S of (\ref{Vo308}) according to
the H$\rm \ddot{o}$lder inequality and the interpolation inequality,
we thus get
\begin{eqnarray}\label{Vo309}\int_{\mathbb{R}^{2}}
\Lambda^{\alpha-\beta}\partial_{x_{1}}\theta \,\,G\,dx &\leq&
C\|\Lambda^{1+\frac{\alpha}{2}-\beta}\theta\|_{L^{2}}
\|\Lambda^{\frac{\alpha}{2}}G\|_{L^{2}}\nonumber\\
&\leq&
C\|\theta\|_{L^{2}}^{1-\tau}\|\Lambda^{\varrho+\frac{\beta}{2}}
\theta\|_{L^{2}}^{\tau}
\|\Lambda^{\frac{\alpha}{2}}G\|_{L^{2}}\nonumber\\&&
\Big(\varrho>\frac{2
+\alpha-3\beta}{2}\Rightarrow \tau=\frac{2+\alpha-2\beta}{2\varrho+\beta}\in (0,\,1)\Big)\nonumber\\
&\leq&\frac{1}{8}\|\Lambda^{\frac{\alpha}{2}}G\|_{L^{2}}^{2}+
\frac{1}{8}\|\Lambda^{\varrho+\frac{\beta}{2}}\theta\|_{L^{2}}^{2}
+C\|\theta\|_{L^{2}}^{2}.
\end{eqnarray}
Next we appeal to the commutator estimate (\ref{t203}) and the
fractional type Gagliardo-Nirenberg inequality (\ref{t205}) to bound
the third term at the R-H-S of (\ref{Vo308})
\begin{eqnarray}\label{Vo31011}\int_{\mathbb{R}^{2}}
[\mathcal {R}_{\beta},\,u_{\theta}\cdot\nabla]\theta \,\,G\,dx
&\leq& \|[\mathcal
{R}_{\beta},\,u_{\theta}\cdot\nabla]\theta\|_{L^{2}}
\|G\|_{L^{2}}\nonumber\\
&\leq& C(\|\nabla u_{\theta}\|_{L^{4}}\|\theta\|_{B_{4,1}^{1-\beta}}
+\|u_{\theta}\|_{L^{\frac{2}{1-\beta}}}\|\theta\|_{L^{2}})\|G\|_{L^{2}}\nonumber\\
&\leq&
C\|\Lambda^{1-\beta}\theta\|_{L^{4}}\|\theta\|_{B_{4,1}^{1-\beta}}
\|G\|_{L^{2}}+\|\theta\|_{L^{2}}^{2}\|G\|_{L^{2}}\nonumber\\
&\leq& C\|\theta\|_{B_{4,1}^{1-\beta}}^{2} \|G\|_{L^{2}}+\|\theta_{0}\|_{L^{2}}^{2}\|G\|_{L^{2}},
\end{eqnarray}
where we have used the classical embedding
$B_{4,1}^{1-\beta}\hookrightarrow W^{1-\beta,4}.$ Thanks to
(\ref{t205}), we have that for $2-2\beta<s<\frac{3-2\beta}{2}$ with
$\frac{1}{2}<\beta<1$
\begin{align}\label{COD1}
\|\theta\|_{{B}_{4,1}^{1-\beta}(\mathbb{R}^{2})}\leq C\|\theta\|_{
{B}_{2,2}^{s}(\mathbb{R}^{2})}^{\lambda}
\|\theta\|_{L^{\infty}(\mathbb{R}^{2})}^{1-\lambda},\quad
\lambda=\frac{2\beta-1}{2-2s}\in (0,\,1).\end{align} According to
Sobolev interpolation, we can get that for any
$\frac{\beta}{2}<s<\frac{\beta}{2}+\varrho$
\begin{align}\label{COD2}
\|\theta\|_{ {B}_{2,2}^{s}(\mathbb{R}^{2})}\leq
\|\theta\|_{H^{\frac{\beta}{2}}(\mathbb{R}^{2})}^{1-l}
\|\Lambda^{\varrho+\frac{\beta}{2}}\theta\|_{L^{2}(\mathbb{R}^{2})}^{l},
\quad l=\frac{s-\frac{\beta}{2}}{\varrho}\in (0,\,1).
\end{align}
Inserting (\ref{COD2}) into (\ref{COD1}) and considering
(\ref{Vo31011}), we can conclude
\begin{eqnarray}\label{Vo310}\int_{\mathbb{R}^{2}}
[\mathcal {R}_{\beta},\,u_{\theta}\cdot\nabla]\theta \,\,G\,dx
&\leq& C\|\theta\|_{ {B}_{2,2}^{s}(\mathbb{R}^{2})}^{2\lambda}
\|\theta\|_{L^{\infty}(\mathbb{R}^{2})}^{2(1-\lambda)}
\|G\|_{L^{2}}\nonumber\\
&\leq&
C\|\theta\|_{H^{\frac{\beta}{2}}}^{2\lambda(1-l)}\|\Lambda^{\varrho+\frac{\beta}{2}}\theta\|_{L^{2}}
^{2\lambda l}
\|\theta_{0}\|_{L^{\infty}(\mathbb{R}^{2})}^{2(1-\lambda)}
\|G\|_{L^{2}}\nonumber\\
&\leq&
\frac{1}{8}\|\Lambda^{\varrho+\frac{\beta}{2}}\theta\|_{L^{2}}^{2}
+C\|\theta\|_{H^{\frac{\beta}{2}}}^{\frac{2\lambda(1-l)}{1-\lambda
l}}(1+\|G\|_{L^{2}}^{2}),
\end{eqnarray}
where in the last line we have used the following fact
$$ 0<s<\frac{\frac{\beta}{2}(2\beta-1)+\varrho}{2\beta-1+\varrho} \Rightarrow \lambda l\leq\frac{1}{2}\Leftrightarrow \frac{2\beta-1}{2-2s}\frac{s-\frac{\beta}{2}}{\varrho}\leq\frac{1}{2}\Rightarrow \frac{1}{1-\lambda l}\leq 2 .$$
Combing all the restrictions on $s$ yields
$$\max\Big\{0,\,\,2-2\beta,\,\,\frac{\beta}{2}\Big\}<s<\min\Big\{\frac{3-2\beta}{2},\,\,
\frac{\beta}{2}+\varrho,\,\,\frac{\frac{\beta}{2}(2\beta-1)+\varrho}{2\beta-1+\varrho}\Big\},$$
which would work as long as
$$\varrho>\frac{4-5\beta}{2},\quad \beta>\frac{1}{2}.$$
Now we focus on the second term at the R-H-S of (\ref{Vo308}). The
estimate (\ref{t206}) as well as energy estimate (\ref{t301}) leads
to
\begin{eqnarray}\label{Vo311}
\|\Lambda^{\gamma\beta}\theta\|_{L_{t}^{\frac{1}{\gamma}}L_{x}^{\frac{1}{\gamma}}}
\leq
C\|\Lambda^{\frac{\beta}{2}}\theta\|_{L_{t}^{2}L_{x}^{2}}^{2\gamma}
\|\theta\|_{L_{t}^{\infty}L_{x}^{\infty}}^{1-2\gamma}<\infty,\quad
0<\gamma<\frac{1}{2}.
\end{eqnarray}
The commutator estimate (\ref{t202}) with
$s_{1}=\gamma\beta<1-\beta,\,\,1-\beta-\gamma\beta<s_{2}<
\frac{\alpha}{2},\,\,p_{1}=\frac{1}{\gamma},\,\,
p_{2}=\frac{4}{2+2s_{2}-\alpha}$ and
$p_{3}=\frac{4}{2+\alpha-2s_{2}-4\gamma}$ allows us to show
\begin{eqnarray}\label{Vo312}&&\int_{\mathbb{R}^{2}}
[\mathcal {R}_{\beta},\,u_{G}\cdot\nabla]\theta \,\,G\,dx \nonumber\\ &\leq&
C\|\Lambda^{\gamma\beta}\theta\|_{L^{\frac{1}{\gamma}}}\|G\|_{W^{s_{2},\,p_{2}}}
\|G\|_{L^{p_{3}}}\nonumber\\
&\leq&C\|\Lambda^{\gamma\beta}\theta\|_{L^{\frac{1}{\gamma}}}
\|\Lambda^{\frac{\alpha}{2}}G\|_{L^{2}}
(\|G\|_{L^{2}}^{1-\mu}\|\Lambda^{\frac{\alpha}{2}}G\|_{L^{2}}^{\mu})\nonumber\\
&&
\Big(\frac{\alpha-4\gamma}{2}< s_{2}<
\alpha-(\alpha+2)\gamma\Rightarrow\mu=\frac{2s_{2}+4\gamma-\alpha}{\alpha}\in (0,\,1)\Big)\nonumber\\
&\leq&\frac{1}{8}\|\Lambda^{\frac{\alpha}{2}}G\|_{L^{2}}^{2}
+C\|\Lambda^{\gamma\beta}\theta\|_{L^{\frac{1}{\gamma}}}^{\frac{2}{1-\mu}}
\|G\|_{L^{2}}^{2}\nonumber\\
&\leq&\frac{1}{8}\|\Lambda^{\frac{\alpha}{2}}G\|_{L^{2}}^{2}
+C(1+\|\Lambda^{\gamma\beta}\theta\|_{L^{\frac{1}{\gamma}}}^{\frac{1}{\gamma}}
)\|G\|_{L^{2}}^{2},
\end{eqnarray}
where in the last line the following fact has been applied
$$\frac{2}{1-\mu}\leq \frac{1}{\gamma}\Rightarrow
\mu=\frac{2s_{2}+4\gamma-\alpha}{\alpha}\leq 1-2\gamma.$$ Putting
all the restrictions on $s_{2}$ together, we have
$$1-\beta-\gamma\beta<s_{2}<
\frac{\alpha}{2},\qquad \frac{\alpha-4\gamma}{2}< s_{2}<
\alpha-(\alpha+2)\gamma.$$ Consequently, the above $s_{2}$ would
work as long as
$$\max\Big\{0,\,\,\frac{2-2\beta-\alpha}{2\beta}\Big\}<\gamma<\min\Big\{\frac{1}{2},\,\,
\frac{1-\beta}{\beta},\,\,
\frac{\alpha+\beta-1}{2+\alpha-\beta}\Big\},$$ which leads to the
key assumption
$$\beta>\frac{4-\alpha^{2}}{4+3\alpha}.$$
It is worth noting that the fact $\frac{4-\alpha^{2}}{4+3\alpha}>1-\alpha$ and this is the only place
where we use the assumption $\beta>\frac{4-\alpha^{2}}{4+3\alpha}$.
Inserting the above aforementioned estimates (\ref{Vo309}),
(\ref{Vo310}) and (\ref{Vo312}) into (\ref{Vo308}) yields
\begin{eqnarray}\label{Vo313}\frac{1}{2}\frac{d}{dt}\|G\|_{L^{2}}^{2}
+\frac{3}{4}\|\Lambda^{\frac{\alpha}{2}}G\|_{L^{2}}^{2}&\leq&
\frac{1}{8}\|\Lambda^{\varrho+\frac{\beta}{2}}\theta\|_{L^{2}}^{2}
+C\|\theta\|_{L^{2}}^{2}+C(1+\|\theta\|_{H^{\frac{\beta}{2}}}^{2})
(1+\|G\|_{L^{2}}^{2})\nonumber\\&&
+C(1+\|\Lambda^{\gamma\beta}\theta\|_{L^{\frac{1}{\gamma}}}^{\frac{1}{\gamma}}
)\|G\|_{L^{2}}^{2}.
\end{eqnarray}
Summing up (\ref{Vo313}) and (\ref{Vor02}), we thereby obtain
$$\frac{d}{dt}(\|G\|_{L^{2}}^{2}+\|\Lambda^{\varrho}\theta\|_{L^{2}}^{2})
+\|\Lambda^{\frac{\alpha}{2}}G\|_{L^{2}}^{2}+\|\Lambda^{\varrho
+\frac{\beta}{2}}\theta\|_{L^{2}}^{2}\leq
C(1+\|\theta\|_{H^{\frac{\beta}{2}}}^{2}+\|\Lambda^{\gamma\beta}\theta\|_{L^{\frac{1}{\gamma}}}^{\frac{1}{\gamma}}
)(1+\|G\|_{L^{2}}^{2}),$$ which together with the classical Gronwall
inequality and (\ref{Vo311}) lead to
$$\sup_{0\leq t\leq T}(\|G(t)\|_{L^{2}}^{2}+\|\Lambda^{\varrho}\theta(t)\|_{L^{2}}^{2})
+\int_{0}^{T}{\big(\|\Lambda^{\frac{\alpha}{2}}G\|_{L^{2}}^{2}+\|\Lambda^{\varrho+
\frac{\beta}{2}}\theta\|_{L^{2}}^{2}\big)(\tau)\,d\tau}\leq
C(T,\,u_{0},\,\theta_{0}).$$ This completes the proof of Lemma
\ref{AZL302}.
\end{proof}

\vskip .3in
\subsection{\textbf{Case 2}}
In this case, we consider the vorticity $\omega$ instead of the
combined quantity $G$. Now we derive the following estimates
concerning vorticity $\omega$ and the temperature $\theta$, which
can be stated as follows.
\begin{lemma}\label{TAZL302}
Assume $(u_{0},\,\theta_{0})$ satisfies the assumptions stated in
Theorem \ref{Th1}. Let $(u, \theta)$ be the corresponding solution
of the system (\ref{Bouss}). If $\beta>\frac{2-\alpha}{2}$, then the
vorticity $\omega$ and the temperature $\theta$ admit the following
bound for any $\frac{2-\alpha-\beta}{2}<\delta<\frac{\beta}{2}$ and
$t\in[0, T]$
\begin{eqnarray}\label{TAZ001}
\|\omega(t)\|_{L^{2}}^{2}+\|\Lambda^{\delta}\theta(t)\|_{L^{2}}^{2}
+\int_{0}^{t}{\big(\|\Lambda^{\frac{\alpha}{2}}\omega\|_{L^{2}}^{2}+
\|\Lambda^{\delta+
\frac{\beta}{2}}\theta\|_{L^{2}}^{2}\big)(\tau)\,d\tau}\leq
C(T,\,u_{0},\,\theta_{0}),
\end{eqnarray}
where $C(T,\,u_{0},\,\theta_{0})$ is a constant depending on $T$ and
the initial data.
\end{lemma}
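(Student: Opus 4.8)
The plan is to run coupled $L^{2}$ energy estimates directly on the vorticity $\omega$ and on the fractional derivative $\Lambda^{\delta}\theta$; no combined quantity $G$ is needed in this regime, since $\beta>\frac{2-\alpha}{2}$ is large enough that the vortex‑stretching term $\partial_{x_{1}}\theta$ in (\ref{t303}) can be absorbed directly by the two dissipations. First I would multiply (\ref{t303}) by $\omega$ and integrate; the transport term drops by $\nabla\cdot u=0$, leaving $\frac12\frac{d}{dt}\|\omega\|_{L^{2}}^{2}+\|\Lambda^{\alpha/2}\omega\|_{L^{2}}^{2}=\int_{\mathbb{R}^{2}}\partial_{x_{1}}\theta\,\omega\,dx$. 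Shifting a power $\Lambda^{\alpha/2}$ onto each factor by Plancherel and using $|\xi_{1}||\xi|^{-\alpha/2}\le|\xi|^{1-\alpha/2}$ bounds the right‑hand side by $\|\Lambda^{1-\alpha/2}\theta\|_{L^{2}}\|\Lambda^{\alpha/2}\omega\|_{L^{2}}$. Since $\delta>\frac{2-\alpha-\beta}{2}$ one has $1-\frac{\alpha}{2}<\delta+\frac{\beta}{2}$, so interpolating $\|\Lambda^{1-\alpha/2}\theta\|_{L^{2}}\le\|\theta\|_{L^{2}}^{1-\tau}\|\Lambda^{\delta+\beta/2}\theta\|_{L^{2}}^{\tau}$ with $\tau=\frac{2-\alpha}{2\delta+\beta}\in(0,1)$, using the $L^{2}$ bound of Lemma \ref{L301} on $\theta$, and applying Young's inequality gives
$$\tfrac12\tfrac{d}{dt}\|\omega\|_{L^{2}}^{2}+\tfrac78\|\Lambda^{\alpha/2}\omega\|_{L^{2}}^{2}\le\tfrac18\|\Lambda^{\delta+\beta/2}\theta\|_{L^{2}}^{2}+C.$$

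For the temperature I would apply $\Lambda^{\delta}$ to $(\ref{Bouss})_{2}$, pair with $\Lambda^{\delta}\theta$, and use $\nabla\cdot u=0$ to write the nonlinear contribution as $-\int_{\mathbb{R}^{2}}\nabla\cdot\big([\Lambda^{\delta},u]\theta\big)\,\Lambda^{\delta}\theta\,dx$, which — exactly as in (\ref{Fi01}) — is bounded by $C\|[\Lambda^{\delta},u]\theta\|_{B^{1-\beta/2}_{2,2}}\|\Lambda^{\delta+\beta/2}\theta\|_{L^{2}}$. The commutator estimate (\ref{tNCE}) applies with $s=1-\frac{\beta}{2}$ and $p=2$ — note $s\in(0,1)$ as $\beta\in(\tfrac12,1)$, and $s+\delta<1$ since $\delta<\frac{\beta}{2}$ — so, together with $\|\nabla u\|_{L^{2}}=\|\omega\|_{L^{2}}$, the embedding $L^{\infty}\hookrightarrow B^{0}_{\infty,\infty}\hookrightarrow B^{\delta-\beta/2}_{\infty,2}$ (legitimate because $\delta-\frac{\beta}{2}<0$), and the $L^{\infty}$ and $L^{2}$ bounds of Lemma \ref{L301}, it yields $C(\|\omega\|_{L^{2}}\|\theta\|_{L^{\infty}}+\|u\|_{L^{2}}\|\theta\|_{L^{2}})\|\Lambda^{\delta+\beta/2}\theta\|_{L^{2}}$ and hence, after Young,
$$\tfrac12\tfrac{d}{dt}\|\Lambda^{\delta}\theta\|_{L^{2}}^{2}+\tfrac34\|\Lambda^{\delta+\beta/2}\theta\|_{L^{2}}^{2}\le C\big(1+\|\omega\|_{L^{2}}^{2}\big).$$
This is precisely the computation (\ref{Fi01})--(\ref{Vor02}) of Case 1, only simpler, because $\omega$ is controlled directly rather than through $G+\mathcal{R}_{\beta}\theta$.

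Finally, adding the two differential inequalities absorbs the $\frac18\|\Lambda^{\delta+\beta/2}\theta\|_{L^{2}}^{2}$ term and produces
$$\tfrac{d}{dt}\big(\|\omega\|_{L^{2}}^{2}+\|\Lambda^{\delta}\theta\|_{L^{2}}^{2}\big)+\|\Lambda^{\alpha/2}\omega\|_{L^{2}}^{2}+\|\Lambda^{\delta+\beta/2}\theta\|_{L^{2}}^{2}\le C\big(1+\|\omega\|_{L^{2}}^{2}+\|\Lambda^{\delta}\theta\|_{L^{2}}^{2}\big),$$
so the classical Gronwall inequality on $[0,T]$ (with $\|\omega_{0}\|_{L^{2}}^{2}+\|\Lambda^{\delta}\theta_{0}\|_{L^{2}}^{2}<\infty$ since $s>2$) gives the supremum bound, and integrating in time gives the dissipation bound, i.e.\ (\ref{TAZ001}). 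The only point requiring care is the compatibility of the two constraints on $\delta$: the interval $\big(\frac{2-\alpha-\beta}{2},\frac{\beta}{2}\big)$ is nonempty exactly when $\beta>\frac{2-\alpha}{2}$, which is the hypothesis, and since $\alpha+\beta<2$ one automatically has $\frac{2-\alpha-\beta}{2}>0$, so $\delta\in(0,1)$ imposes nothing extra. There is no genuine obstacle in this favourable regime; the real difficulty is postponed to the smaller values of $\beta$ handled by the more elaborate analysis of Case 1.
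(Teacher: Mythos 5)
Your proposal is correct and follows essentially the same route as the paper: test the vorticity equation with $\omega$, bound $\int\partial_{x_{1}}\theta\,\omega\,dx$ by $\|\Lambda^{1-\frac{\alpha}{2}}\theta\|_{L^{2}}\|\Lambda^{\frac{\alpha}{2}}\omega\|_{L^{2}}$ and interpolate (the paper interpolates between $\Lambda^{\delta}\theta$ and $\Lambda^{\delta+\frac{\beta}{2}}\theta$ rather than between $\theta$ and $\Lambda^{\delta+\frac{\beta}{2}}\theta$, an inessential variant), while the $\Lambda^{\delta}\theta$ estimate via the commutator bound (\ref{tNCE}) is exactly the computation (\ref{Fi01})--(\ref{Vor02}) repeated, and Gronwall closes the argument just as in the paper.
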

\begin{rem}\rm
Similarly, by energy estimate (\ref{t301}) and the classical
interpolation, we find that (\ref{TAZ001}) is true for any $0\leq
\delta<\frac{\beta}{2}$.
\end{rem}

\begin{proof}[\textbf{Proof of Lemma \ref{TAZL302}}]
With the same argument used in obtaining (\ref{Fi01}), we find that
\begin{eqnarray}\label{TAZ002}\frac{1}{2}\frac{d}{dt}\|\Lambda^{\delta}\theta\|_{L^{2}}^{2}
+\|\Lambda^{\delta+\frac{\beta}{2}}\theta\|_{L^{2}}^{2}
&\leq& C\|\Lambda^{1-\frac{\beta}{2}}[\Lambda^{\delta}, u]\theta\|_{L^{2}}
\|\Lambda^{\delta+\frac{\beta}{2}}\theta\|_{L^{2}}\nonumber\\
&\leq& C\|[\Lambda^{\delta}, u ]\theta\|_{H^{1-\frac{\beta}{2}}}\|\Lambda^{\delta+\frac{\beta}{2}}\theta\|_{L^{2}}\nonumber\\
&\leq& C\|[\Lambda^{\delta}, u
]\theta\|_{B_{2,2}^{1-\frac{\beta}{2}}}\|\Lambda^{\delta
+\frac{\beta}{2}}\theta\|_{L^{2}}
\nonumber\\
&\leq& C(\|\nabla
u\|_{L^{2}}\|\theta\|_{B_{\infty,2}^{\delta-\frac{\beta}{2}}}
+\|u\|_{L^{2}}\|\theta\|_{L^{2}})
\|\Lambda^{\delta+\frac{\beta}{2}}\theta\|_{L^{2}}\quad
\Big(\delta<\frac{\beta}{2}\Big)\nonumber\\
&\leq&
C(\|\omega\|_{L^{2}}\|\theta\|_{L^{\infty}}+\|u\|_{L^{2}}\|\theta\|_{L^{2}})
\|\Lambda^{\delta+\frac{\beta}{2}}\theta\|_{L^{2}}
\nonumber\\
&\leq&
\frac{1}{4}\|\Lambda^{\delta+\frac{\beta}{2}}\theta\|_{L^{2}}^{2}+
C\|\theta_{0}\|_{L^{\infty}}^{2}\|\omega\|_{L^{2}}^{2}+C\|u\|_{L^{2}}^{2}
\|\theta\|_{L^{2}}^{2}.\nonumber
\end{eqnarray}
Substituting the above estimate into (\ref{TAZ002}), we thus obtain
\begin{eqnarray}\label{TVor02}\frac{1}{2}\frac{d}{dt}\|\Lambda^{\delta}\theta\|_{L^{2}}^{2}
+\frac{3}{4}\|\Lambda^{\delta+\frac{\beta}{2}}\theta\|_{L^{2}}^{2}\leq
C\|\theta_{0}\|_{L^{\infty}}^{2}\|\omega\|_{L^{2}}^{2}.
\end{eqnarray}
In order to obtain the global $H^{1}$ bound of the velocity $u$, we resort to
the vorticity $w$ equation (\ref{t303})
\begin{eqnarray}\label{TTes3}\partial_{t}w+(u\cdot\nabla)w+\Lambda^{\alpha} w=\partial_{x_{1}}\theta.\end{eqnarray}
Testing it by $\omega$ yields
\begin{eqnarray}\label{TVor01}\frac{1}{2}\frac{d}{dt}\|\omega\|_{L^{2}}^{2}
+\|\Lambda^{\frac{\alpha}{2}}\omega\|_{L^{2}}^{2}&=&\int_{\mathbb{R}^{2}}
\partial_{x_{1}}\theta \,\,\omega\,dx\nonumber\\
&\leq& C\|\Lambda^{1-\frac{\alpha}{2}}\theta\|_{L^{2}}
\|\Lambda^{\frac{\alpha}{2}}\omega\|_{L^{2}}\nonumber\\
&\leq&
C\|\Lambda^{\delta}\theta\|_{L^{2}}^{1-\tau}\|\Lambda^{\delta+\frac{\beta}{2}}
\theta\|_{L^{2}}^{\tau}
\|\Lambda^{\frac{\alpha}{2}}\omega\|_{L^{2}}\nonumber\\
&\leq&\frac{1}{2}\|\Lambda^{\frac{\alpha}{2}}\omega\|_{L^{2}}^{2}+
\frac{1}{4}\|\Lambda^{\delta+\frac{\beta}{2}}\theta\|_{L^{2}}^{2}
+C\|\Lambda^{\delta}\theta\|_{L^{2}}^{2},
\end{eqnarray}
where we have applied the following Sobolev interpolation
$$\|\Lambda^{1-\frac{\alpha}{2}}\theta\|_{L^{2}}\leq C\|\Lambda^{\delta}
\theta\|_{L^{2}}^{1-\tau}\|\Lambda^{\delta+\frac{\beta}{2}}
\theta\|_{L^{2}}^{\tau},\quad \tau=\frac{2-\alpha-2\delta}{\beta}\in
(0,\,1).$$ Note the fact
$$\frac{2-\alpha-\beta}{2}<\delta<\frac{2-\alpha}{2}\Rightarrow 0<\tau<1.$$
Putting all the restrictions on $\delta$ together, we have
$$\frac{2-\alpha-\beta}{2}<\delta<\min\Big\{\frac{2-\alpha}{2},\,\,\frac{\beta}{2}
\Big\}=\frac{\beta}{2}.$$ Thus, this is the only place in the proof
where we use the assumption of the
theorem, namely $\beta>\frac{2-\alpha}{2}$.\\
Summing up (\ref{TVor01}) and (\ref{TVor02}), we thereby obtain
$$\frac{d}{dt}(\|\omega\|_{L^{2}}^{2}+\|\Lambda^{\delta}\theta\|_{L^{2}}^{2})
+\|\Lambda^{\frac{\alpha}{2}}\omega\|_{L^{2}}^{2}+\|\Lambda^{\delta
+\frac{\beta}{2}}\theta\|_{L^{2}}^{2}\leq
C(\|\omega\|_{L^{2}}^{2}+\|\Lambda^{\delta}\theta\|_{L^{2}}^{2})+
C\|u\|_{L^{2}}^{2} \|\theta\|_{L^{2}}^{2},$$ which together with the
classical Gronwall inequality leads to
$$\sup_{0\leq t\leq T}(\|\omega\|_{L^{2}}^{2}+
\|\Lambda^{\delta}\theta\|_{L^{2}}^{2})(t)
+\int_{0}^{T}{\big(\|\Lambda^{\frac{\alpha}{2}}
\omega\|_{L^{2}}^{2}+\|\Lambda^{\delta+
\frac{\beta}{2}}\theta\|_{L^{2}}^{2}\big)(\tau)\,d\tau}<\infty.$$
This completes the proof of Lemma \ref{TAZL302}.
\end{proof}

\vskip .3in
Both in \textbf{Case 1} and \textbf{Case 2}, we can establish the
following global {\it a priori} bound $\|u(t)\|_{L^{r}}$ for any
$2\leq r<\infty$ and $0\leq t\leq T$.
\begin{lemma}\label{L304}
Assume $\beta$ satisfies the assumptions stated in Lemmas \ref{AZL302} and \ref{TAZL302}, then the velocity field $u$ obeys
the following key global {\it a priori} bound for any $2\leq
r<\infty$ and $0\leq t\leq T$
\begin{eqnarray}\label{tNew03}
\sup_{0\leq t\leq T}\|u(t)\|_{L^{r}}\leq
C(r,\,T,\,u_{0},\,\theta_{0}),
\end{eqnarray}
where $C(r,\,T,\,u_{0},\,\theta_{0})$ is a constant depending on
$r$, $T$ and the initial data.
\end{lemma}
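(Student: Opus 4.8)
The plan is to deduce the claim from a uniform-in-time $L^{2}$ bound on the vorticity together with the two-dimensional Sobolev embedding $H^{1}(\mathbb{R}^{2})\hookrightarrow L^{r}(\mathbb{R}^{2})$, which holds for every $2\leq r<\infty$ with a constant $C(r)$ depending only on $r$ (and blowing up as $r\to\infty$). Indeed, since $\nabla\cdot u=0$ the Biot--Savart law gives $u=\nabla^{\perp}\Delta^{-1}\omega$, so each entry of $\nabla u$ is a zeroth-order (double Riesz) operator applied to $\omega$, and by Plancherel $\|\nabla u(t)\|_{L^{2}}=\|\omega(t)\|_{L^{2}}$. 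Combining this with the energy bound $\|u(t)\|_{L^{2}}\leq \|u_{0}\|_{L^{2}}+t\|\theta_{0}\|_{L^{2}}$ of Lemma \ref{L301} and the Gagliardo--Nirenberg inequality $\|u\|_{L^{r}}\leq C(r)\|u\|_{L^{2}}^{2/r}\|\nabla u\|_{L^{2}}^{1-2/r}$ on $\mathbb{R}^{2}$ (a consequence of $\dot H^{1-2/r}(\mathbb{R}^{2})\hookrightarrow L^{r}(\mathbb{R}^{2})$ together with the interpolation $\|\Lambda^{1-2/r}u\|_{L^{2}}\leq\|u\|_{L^{2}}^{2/r}\|\nabla u\|_{L^{2}}^{1-2/r}$), it suffices to establish $\sup_{0\leq t\leq T}\|\omega(t)\|_{L^{2}}\leq C(T,u_{0},\theta_{0})$ in each of the two cases of Theorem \ref{Th1}.

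In \textbf{Case 2} this is already contained in Lemma \ref{TAZL302}. In \textbf{Case 1}, we instead use the identity $\omega=G+\mathcal{R}_{\beta}\theta$. The combined quantity is controlled uniformly by Lemma \ref{AZL302}, namely $\sup_{0\leq t\leq T}\|G(t)\|_{L^{2}}\leq C(T,u_{0},\theta_{0})$. For the remaining term, since $\widehat{\mathcal{R}_{\beta}\theta}(\xi)=\tfrac{i\xi_{1}}{|\xi|^{\beta}}\hat\theta(\xi)$ we have $\|\mathcal{R}_{\beta}\theta\|_{L^{2}}\leq\|\Lambda^{1-\beta}\theta\|_{L^{2}}$, and the hypothesis $\beta>\tfrac{2}{3}$ of \textbf{Case 1} forces $1-\beta<\tfrac{\beta}{2}$. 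Hence we may fix $\varrho$ with $1-\beta\leq\varrho<\tfrac{\beta}{2}$ and interpolate $\|\Lambda^{1-\beta}\theta\|_{L^{2}}\leq\|\theta\|_{L^{2}}^{1-\frac{1-\beta}{\varrho}}\|\Lambda^{\varrho}\theta\|_{L^{2}}^{\frac{1-\beta}{\varrho}}$; both factors are bounded on $[0,T]$ by Lemma \ref{L301} and by the remark following Lemma \ref{AZL302} (which ensures that (\ref{AZ001}) holds for every $0\leq\varrho<\tfrac{\beta}{2}$). This gives the desired uniform bound on $\|\omega(t)\|_{L^{2}}$, and assembling the inequalities of the previous paragraph finishes the proof.

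In this sense Lemma \ref{L304} is really a corollary of Lemmas \ref{L301}, \ref{AZL302} and \ref{TAZL302}, so I do not anticipate a genuine obstacle; the single point requiring a short argument is the uniform $L^{2}$ control of $\mathcal{R}_{\beta}\theta$ in \textbf{Case 1}, which is exactly where the restriction $\beta>\tfrac{2}{3}$ (rather than merely $\beta>1-\alpha$) enters. One could alternatively bound $\|u\|_{L^{r}}$ directly through the splitting $u=u_{G}+u_{\theta}$ of (\ref{t306}), using that $u_{G}=\nabla^{\perp}\Delta^{-1}G$ has $\|\nabla u_{G}\|_{L^{2}}\lesssim\|G\|_{L^{2}}$ and that $u_{\theta}$ is a zeroth-order operator applied to $\Lambda^{-\beta}\theta$, but routing everything through $\omega$ and $H^{1}$ is cleaner and makes the dependence on $r$ transparent.
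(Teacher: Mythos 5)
Your argument is correct and coincides with the paper's own proof: in \textbf{Case 2} the $L^{2}$ bound on $\omega$ comes straight from Lemma \ref{TAZL302}, in \textbf{Case 1} one writes $\omega=G+\mathcal{R}_{\beta}\theta$ and controls $\|\mathcal{R}_{\beta}\theta\|_{L^{2}}\leq\|\Lambda^{1-\beta}\theta\|_{L^{2}}$ via the bound on $\Lambda^{\varrho}\theta$ (using exactly the restriction $\beta>\tfrac{2}{3}$, i.e. $1-\beta<\tfrac{\beta}{2}$), and then one concludes by the same interpolation $\|u\|_{L^{r}}\leq C(r)\|u\|_{L^{2}}^{2/r}\|\nabla u\|_{L^{2}}^{1-2/r}$ together with $\|\nabla u\|_{L^{2}}\leq C\|\omega\|_{L^{2}}$ and Lemma \ref{L301}. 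No substantive difference from the paper's proof.
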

\begin{proof}[\textbf{Proof of Lemma \ref{L304}}]
Let us notice that in Case 1, we have $\beta>\frac{2}{3}$. As a
result, we can select $\varrho$ satisfying
$1-\beta<\varrho<\frac{\beta}{2}$ such that
$$\|\mathcal{R}_{\beta}\theta\|_{L^{2}}\leq \|\Lambda^{1-\beta}\theta
\|_{L^{2}}\leq \|\theta \|_{H^{\varrho}}<\infty.$$ Recalling
$G=\omega-\mathcal {R}_{\beta}\theta$ and the estimate
(\ref{AZ001}), we get
$$\|\omega\|_{L^{2}}\leq \|G\|_{L^{2}}+\|\mathcal{R}_{\beta}
\theta\|_{L^{2}}<\infty,$$ which together with (\ref{TAZ001})
implies that we have both in \textbf{Case 1} and \textbf{Case 2}
$$\sup_{0\leq t\leq T}\|\omega(t)\|_{L^{2}}<\infty.$$
By the Sobolev interpolation inequality
\begin{eqnarray}\sup_{0\leq t\leq T}\|u(t)\|_{L^{r}}&\leq & C(r)\|u\|_{L^{2}}^{\frac{2}{r}}\|\nabla u\|_{L^{2}}^{1-\frac{2}{r}}
\nonumber\\&\leq&
C(r)\|u\|_{L^{2}}^{\frac{2}{r}}\|\omega\|_{L^{2}}^{1-\frac{2}{r}}
\nonumber\\&\leq& C(r,\,T,\,u_{0},\,\theta_{0}).
\end{eqnarray}
Consequently, this immediately completes the proof of Lemma
\ref{L304}.
\end{proof}

With the help of the above estimate (\ref{tNew03}), we are able to
the next lemma, which is concerned with the global {\it a priori} bounda
$\|\nabla\theta \|_{L^{\infty}}$
 as well as $\|\omega\|_{L^{\infty}}$.
\begin{lemma}\label{L305}
Assume $(u_{0},\,\theta_{0})$ satisfies the assumptions stated in
Theorem \ref{Th1}. Assume $\beta$ satisfies the assumptions stated in Lemmas \ref{AZL302} and \ref{TAZL302}, then the
temperature $\theta$ and the vorticity $\omega$ admit the following key global {\it a priori} bound
\begin{eqnarray}\label{tNew007}
\sup_{0\leq t\leq T}\|\nabla\theta (t)\|_{L^{\infty}}\leq
C(T,\,u_{0},\,\theta_{0}),
\end{eqnarray}
\begin{eqnarray}\label{Vorticity}\sup_{0\leq t\leq T}\|\omega(t)\|_{L^{\infty}}\leq
C(T,\,u_{0},\,\theta_{0}),
\end{eqnarray}
where $C(T,\,u_{0},\,\theta_{0})$ is a constant depending on $T$ and the initial data.
\end{lemma}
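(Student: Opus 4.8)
The plan is to obtain differential inequalities for $\|\nabla\theta(t)\|_{L^\infty}$ and $\|\omega(t)\|_{L^\infty}$ through the nonlinear maximum principle of Constantin and Vicol \cite{CV}, and then to close the resulting coupled system by controlling $\|\nabla u\|_{L^\infty}$ via a logarithmic inequality and a continuity argument. First I would apply $\nabla$ to $(\ref{Bouss})_2$, which gives
$$\partial_{t}\nabla\theta+(u\cdot\nabla)\nabla\theta+\Lambda^{\beta}\nabla\theta=-(\nabla u)\cdot\nabla\theta,$$
take the scalar product with $\nabla\theta$, and evaluate at a point $\bar x=\bar x(t)$ at which $|\nabla\theta(\cdot,t)|^{2}$ attains its maximum. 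The transport term vanishes there, the nonlinear lower bound of \cite{CV} yields $\nabla\theta(\bar x)\cdot\Lambda^{\beta}\nabla\theta(\bar x)\geq c\,\|\nabla\theta\|_{L^{\infty}}^{2+\beta}\|\theta\|_{L^{\infty}}^{-\beta}$, and the maximum principle $\|\theta(t)\|_{L^{\infty}}\leq\|\theta_{0}\|_{L^{\infty}}$ from Lemma \ref{L301} gives, after dividing by $\|\nabla\theta\|_{L^{\infty}}$,
$$\frac{d}{dt}\|\nabla\theta\|_{L^{\infty}}+\frac{c}{\|\theta_{0}\|_{L^{\infty}}^{\beta}}\|\nabla\theta\|_{L^{\infty}}^{1+\beta}\leq C\|\nabla u\|_{L^{\infty}}\|\nabla\theta\|_{L^{\infty}}.$$

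Running the same argument on the vorticity equation $(\ref{t303})$, at a maximum point of $\omega(\cdot,t)^{2}$ the nonlinear lower bound gives $\omega\,\Lambda^{\alpha}\omega\geq c\,\|\omega\|_{L^{\infty}}^{2+\alpha}\|\omega\|_{L^{2}}^{-\alpha}$, and since $\sup_{[0,T]}\|\omega(t)\|_{L^{2}}<\infty$ was established in the proof of Lemma \ref{L304} (from $(\ref{AZ001})$ in \textbf{Case 1} and from $(\ref{TAZ001})$ in \textbf{Case 2}), while the forcing obeys $|\partial_{x_{1}}\theta|\leq\|\nabla\theta\|_{L^{\infty}}$, I obtain
$$\frac{d}{dt}\|\omega\|_{L^{\infty}}+c\,\|\omega\|_{L^{\infty}}^{1+\alpha}\leq\|\nabla\theta\|_{L^{\infty}}.$$
These two inequalities are the heart of the matter; the only obstruction to closing them is that $\|\nabla u\|_{L^{\infty}}$ is not bounded by $\|\omega\|_{L^{\infty}}$ alone.

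To remove this obstruction I would invoke the logarithmic estimate $\|\nabla u\|_{L^{\infty}}\leq C\big(1+\|\omega\|_{L^{2}}+\|\omega\|_{L^{\infty}}\log(e+\|u\|_{H^{s}}+\|\theta\|_{H^{s}})\big)$ together with the standard $H^{s}$ energy inequality $\frac{d}{dt}\big(\|u\|_{H^{s}}^{2}+\|\theta\|_{H^{s}}^{2}\big)\leq C\big(1+\|\nabla u\|_{L^{\infty}}+\|\nabla\theta\|_{L^{\infty}}\big)\big(\|u\|_{H^{s}}^{2}+\|\theta\|_{H^{s}}^{2}\big)$, and close the whole system by a continuity argument on $[0,T]$. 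On the maximal subinterval where $\|\nabla\theta\|_{L^{\infty}}$, $\|\omega\|_{L^{\infty}}$ and $\|u\|_{H^{s}}+\|\theta\|_{H^{s}}$ are finite, the second displayed inequality (using its damping term) bounds $\|\omega\|_{L^{\infty}}$ in terms of $\int_{0}^{t}\|\nabla\theta\|_{L^{\infty}}\,d\tau$; the logarithmic estimate then feeds into the $H^{s}$ inequality to yield a finite (double-exponential) bound for $\|u\|_{H^{s}}+\|\theta\|_{H^{s}}$, hence for $\int_{0}^{T}\|\nabla u\|_{L^{\infty}}\,d\tau$; and the first displayed inequality, by Gronwall (or again its damping term $c\,\|\nabla\theta\|_{L^{\infty}}^{1+\beta}\|\theta_{0}\|_{L^{\infty}}^{-\beta}$), then gives $\sup_{[0,T]}\|\nabla\theta\|_{L^{\infty}}<\infty$, which via the $\omega$-inequality gives $\sup_{[0,T]}\|\omega\|_{L^{\infty}}<\infty$. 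Hence the maximal subinterval is all of $[0,T]$, which is $(\ref{tNew007})$ and $(\ref{Vorticity})$.

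The hard part is precisely the control of $\|\nabla u\|_{L^{\infty}}$: because $\nabla u=\nabla\nabla^{\perp}\Delta^{-1}\omega$ is only log-bounded on $L^{\infty}$, one cannot avoid bringing a slightly higher norm into play and running the coupled continuity argument. An alternative that avoids the logarithm would be to upgrade $\omega\in L^{\infty}_{t}L^{\infty}$ to $\omega\in L^{\infty}_{t}C^{\gamma}$ using the H\"older regularity theory for advection--fractional-diffusion equations \cite{Sil20121}, the transport velocity lying in $L^{\infty}_{t}L^{r}$ for every $r<\infty$ by Lemma \ref{L304} and the forcing $\partial_{x_{1}}\theta$ lying in $L^{\infty}_{t,x}$ by $(\ref{tNew007})$; then $\nabla u\in L^{\infty}_{t}C^{\gamma}\hookrightarrow L^{\infty}_{t,x}$ at once. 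Either way, once $(\ref{tNew007})$ and $(\ref{Vorticity})$ are in hand, the propagation of the $H^{s}$-regularity of $(u,\theta)$, and therefore Theorem \ref{Th1}, follow by the standard argument carried out in Section 3.
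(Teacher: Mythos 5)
Your two pointwise differential inequalities are essentially the paper's (\ref{tNew011}) and (\ref{tNew012}): the estimate for $\Phi(t)=\|\nabla\theta(t)\|_{L^{\infty}}$ is identical, and your vorticity estimate, with the lower bound $c\,\Omega^{2+\alpha}\|\omega\|_{L^{2}}^{-\alpha}$ (legitimate since $\sup_{[0,T]}\|\omega\|_{L^{2}}<\infty$ by Lemmas \ref{AZL302}--\ref{TAZL302}), is a harmless and in fact slightly stronger variant of the paper's, which uses $\|u\|_{L^{r}}$ and the exponent $2+\frac{\alpha r}{2+r}$. The gap is in the closure. You bound $\|\nabla u\|_{L^{\infty}}$ by $C\big(1+\|\omega\|_{L^{2}}+\Omega\log(e+\|u\|_{H^{s}}+\|\theta\|_{H^{s}})\big)$ and claim the $H^{s}$ energy inequality then yields a finite bound for $\|u\|_{H^{s}}+\|\theta\|_{H^{s}}$, hence for $\int_{0}^{T}\|\nabla u\|_{L^{\infty}}\,d\tau$; but that bound requires $\int_{0}^{T}\big(\|\nabla u\|_{L^{\infty}}+\Phi\big)\,d\tau<\infty$, which is precisely what is at stake, so the argument is circular. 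Quantitatively it does not close either: using the damping terms one gets $\Phi\lesssim 1+\big(\sup\|\nabla u\|_{L^{\infty}}\big)^{1/\beta}$ and $\Omega\lesssim 1+\big(\sup\Phi\big)^{1/(1+\alpha)}$, and feeding these into your logarithmic bound and the $H^{s}$ inequality produces, for $y=\log\big(e+\|u\|_{H^{s}}^{2}+\|\theta\|_{H^{s}}^{2}\big)$, a differential inequality of the type $y'\lesssim 1+y^{\sigma}$ with $\sigma=\frac{\beta(1+\alpha)}{\beta(1+\alpha)-1}>1$, which may blow up in finite time; equivalently, in a barrier argument at the first time $\Phi=M$ your $\log(e+H^{s})$ factor is only controlled by $(1+M)e^{CM^{1/(1+\alpha)}T}$, which overwhelms the damping $\Phi^{1+\beta}$ no matter how large $M$ is.

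The paper avoids exactly this by using the Constantin--Vicol estimate (\ref{tNew013}), in which the logarithm acts on $\int_{0}^{t}\big(1+K(\tau)+\Omega(\tau)+\Phi(\tau)\big)^{\Gamma}\,d\tau$ rather than on the instantaneous $H^{s}$ norm; then, at the first time $T_{0}$ with $\Phi(T_{0})=M$, one has $\Omega\lesssim M^{\frac{2+r}{2+(1+\alpha)r}}$ and hence $\|\nabla u\|_{L^{\infty}}\lesssim M^{\frac{2+r}{2+(1+\alpha)r}}\log M$, and the key inequality $\beta>\frac{1}{1+\alpha}$ (a consequence of $\beta>\beta^{\ast}$, which you never invoke) lets the damping $\Phi^{1+\beta}$ force $\Phi(T_{0})\leq M/4$, a contradiction. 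Your sketch neither uses such a refined logarithmic bound nor identifies where $\beta>\beta^{\ast}$ enters, so the decisive step is missing. Your proposed alternative via the H\"older estimates of \cite{Sil20121} is also circular as written, since it takes the forcing bound $\partial_{x_{1}}\theta\in L^{\infty}_{t,x}$, i.e.\ (\ref{tNew007}) itself, as an input. The final passage from (\ref{tNew007}) to (\ref{Vorticity}) is fine (the paper simply uses the maximum principle for the vorticity equation).
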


\begin{proof}[\textbf{Proof of Lemma \ref{L305}}]
The idea of the proof is based on the argument
of nonlinear lower bounds for the fractional Laplacian established in \cite{CV}. For convenience the reader, we present the details as follows.
By the elementary calculations, it is not hard to check that
$$\beta^{\ast}\geq\frac{1}{1+\alpha}.$$
Therefore, this fact further implies
\begin{eqnarray}\beta>\frac{1}{1+\alpha}.\nonumber\end{eqnarray}
We start with the following pointwise bound
\begin{eqnarray}\label{tNew008}
\nabla f(x)\cdot \Lambda^{\alpha}\nabla f(x)\geq \frac{1}{2}\Lambda^{\alpha}(|\nabla f(x)|^{2})+\frac{|\nabla f(x)|^{2+\frac{\alpha p}{p+2}}}{c\|f\|_{L^{p}}^{\frac{\alpha p}{p+2}}},
\end{eqnarray}
which can be proved by combining the proofs of Theorems 2.2 and 2.5 of \cite{CV}.\\
Applying $\nabla$ to the temperature equation of (\ref{Bouss}) and multiplying the resulting equation by $\nabla \theta$ lead to
\begin{eqnarray}\label{tNew009}
\frac{1}{2}(\partial_{t}+u \cdot \nabla)|\nabla\theta|^{2} +\nabla\theta\cdot\Lambda^{\beta}\nabla\theta=-\nabla u:\nabla\theta\cdot\nabla\theta.
\end{eqnarray}
Thus, by making use of (\ref{tNew008}) with $p=\infty$, we immediately arrive at
\begin{eqnarray}\label{tNew010}
\frac{1}{2}(\partial_{t}+u \cdot \nabla+\Lambda^{\beta})|\nabla\theta|^{2}+c_{1}\frac{
|\nabla\theta(x)|^{2+\beta}}{\|\theta_{0}\|_{L^{\infty}}^{\beta}}\leq-\nabla u:\nabla\theta\cdot\nabla\theta.
\end{eqnarray}
Suppose that $|\nabla\theta(x,t)|$ achieves the maximum at the point $\widetilde{x}=\widetilde{x}(t)$, then we get
\begin{eqnarray}\label{tNew011}
\partial_{t}|\nabla\theta(\widetilde{x},t)|^{2}+c_{1}\frac{
\Phi(t)^{2+\beta}}{\|\theta_{0}\|_{L^{\infty}}^{\beta}}\leq\Phi(t)^{2}\|\nabla u\|_{L^{\infty}},
\end{eqnarray}
where
$$\Phi(t)=\|\nabla \theta(., t)\|_{L^{\infty}}.$$
Similarly, let us assume that $|\omega(x,t)|$ achieves the maximum at the point $\widehat{x}=\widehat{x}(t)$ and denote
$$\Omega(t)=\|\omega(., t)\|_{L^{\infty}}.$$
Recalling the vorticity equation
\begin{eqnarray}\partial_{t}w+(u\cdot\nabla)w+\Lambda^{\alpha} w=\partial_{x_{1}}\theta,\nonumber\end{eqnarray}
and adapting the same argument used above, we can conclude that
\begin{eqnarray}\label{tNew012}
\partial_{t}|\omega(\widehat{x},t)|^{2}+c_{2}\frac{
\Omega(t)^{2+\frac{\alpha r}{2+r}}}{\|u\|_{L^{r}}^{\frac{\alpha r}{2+r}}}\leq\Phi(t) \Omega(t),
\end{eqnarray}
where the number $r\in (2,\,\infty)$ will be fixed hereafter.\\
To bound $\| \nabla u\|_{L^{\infty}}$, we need the following logarithmic inequality which was established in (\cite{CV})
\begin{eqnarray}\label{tNew013}
\|\nabla u(.,t)\|_{L^{\infty}}&\leq& C_{0}+C_{0}\Omega(t)\nonumber\\&&+C_{0}\Omega(t)\log_{+}
\Big(1+\int_{0}^{t}{\big(1+K(\tau)+\Omega(\tau)+\Phi(\tau)\big)^{\Gamma}\,d\tau}\Big)
\end{eqnarray}
where $C_{0}>0$ is a constant depending on initial data, $K(\tau)$ is a bounded function on the interval $[0,\,T]$ and $\Gamma=\Gamma(\alpha,\beta)$.\\
Therefore, it follows from (\ref{tNew011}) and (\ref{tNew012}) that
\begin{eqnarray}\label{tNew014}
 \partial_{t}|\nabla\theta(\widetilde{x},t)|^{2}+C_{1}
\Phi(t)^{2+\beta} &\leq& C_{0}\Phi(t)^{2}\Big\{1+ \Omega(t) + \Omega(t)\log_{+}
\Big(1\nonumber\\&&+\int_{0}^{t}{\big(1+K(\tau)+\Omega(\tau)+\Phi(\tau)\big)^{\Gamma}
\,d\tau}\Big)\Big\},
\end{eqnarray}
\begin{eqnarray}\label{tNew015}
\partial_{t}|\omega(\widehat{x},t)|^{2}+C_{2}
\Omega(t)^{2+\frac{\alpha r}{2+r}} \leq\Phi(t) \Omega(t),
\end{eqnarray}
where $\Phi(t)=|\nabla \theta(\widetilde{x}, t)|$ and $\Omega(t)= |\omega(\widehat{x}, t)|$, and the constants $C_{0}$, $C_{1}$, $C_{2}$ depend on the initial data, $\alpha$, $\beta$ and $\|u(T)\|_{L^{r}}$.\\
Suppose that $M>0$ is large enough to be fixed hereafter. Assuming the solutions blow up at time $T$, thus $\lim_{t\rightarrow T}\Phi(t)=\infty$. Then we can select $T_{0}\in (0,\,T)$ as the first time such that $\Phi(T_{0})=M\geq 4\Phi(0)$. Now one may deduce from (\ref{tNew015}) that for any $t\in [0,\,T_{0}]$
$$\Omega(t)\leq\max\Big\{\Omega(0),\,\,\, \Big(\frac{M}{C_{2}}\Big)^{\frac{2+r}{2+(1+\alpha)r}}\Big\}
=\Big(\frac{M}{C_{2}}\Big)^{\frac{2+r}{2+(1+\alpha)r}}:=\widetilde{M},$$
as long as $M$ is large enough in terms of $\Omega(0)$, $\alpha$, $r$ and $C_{2}$.
Let us give details about how to get the above estimate. Actually, if $\Omega(t)\geq \widetilde{M}$, then
\begin{eqnarray}C_{2}
\Omega(t)^{2+\frac{\alpha r}{2+r}}-\Phi(t) \Omega(t)&\geq& C_{2}
\Omega(t)^{2+\frac{\alpha r}{2+r}}-M\Omega(t)\nonumber\\&\geq&
(C_{2}
\Omega(t)^{1+\frac{\alpha r}{2+r}}-M)\Omega(t)\nonumber\\&\geq&
(C_{2}
\widetilde{M}^{1+\frac{\alpha r}{2+r}}-M)\Omega(t)\nonumber\\
&=&0.\nonumber
\end{eqnarray}
Thus it follows from (\ref{tNew015}) that
$\partial_{t}|\omega(\widehat{x},t)|^{2}\leq0$. This implies that $\Omega(t)$
cannot exceed the value $\widetilde{M}$.
Hence, the following inequality is an easy consequence of (\ref{tNew014})
\begin{eqnarray}\label{tNew016}
\partial_{t}|\nabla\theta(\widetilde{x},t)|^{2}+C_{1}
\Phi(t)^{2+\beta}\leq C_{0}\Phi(t)^{2}\Big\{1+ \widetilde{M}+ \widetilde{M}\log_{+}
\Big(1+\big(1+K(T)+\widetilde{M}+M\big)^{\Gamma}
\Big)\Big\}.\nonumber
\end{eqnarray}
Repeating the same argument as above, we obtain
\begin{eqnarray}\label{tNew017}
\Phi(t)^{\beta}\leq \max\Big\{\Phi(0)^{\beta},\,\,\frac{C_{0}}{C_{1}}\Big(1+ \widetilde{M}+ \widetilde{M}\log_{+}
\Big(1+\big(1+K(T)+\widetilde{M}+M\big)^{\Gamma}
\Big)\Big)\Big\},\nonumber
\end{eqnarray}
for any $t\in [0,\,T_{0}]$.\\
Now notice that $\widetilde{M}\approx M^{\frac{2+r}{2+(1+\alpha)r}}$, we select $M$ large enough such that
$$\frac{C_{0}}{C_{1}}\Big(1+ \widetilde{M}+ \widetilde{M}\log_{+}
\Big(1+\big(1+K(T)+\widetilde{M}+M\big)^{\Gamma}
\Big)\leq \Big(\frac{M}{4}\Big)^{\beta},$$
which is equivalent to
\begin{eqnarray}\label{KeyCon1}1+M^{\frac{2+r}{2+(1+\alpha)r}} (1+\log_{+}M)\leq \frac{M^{\beta}}{C}.
\end{eqnarray}
Thanks to the fact $\beta>\frac{1}{1+\alpha}$, it is sufficient to choose $r$ as
$$r_{0}<r<\infty,\qquad r_{0}=\max\Big\{\frac{2(1-\beta)}{(1+\alpha)\beta-1},\,\,2\Big\},$$
then the above inequality (\ref{KeyCon1}) can be guaranteed due to the following fact
$$r_{0}<r\Rightarrow \frac{2+r}{2+(1+\alpha)r}<\beta .$$
Hence, it is not difficult to verify that $\Phi(T_{0})\leq\frac{M}{4}$, which contradict the definition of $T_{0}$. We thus get the fact that $\Phi(t)$ never blows up as $t\rightarrow T$ when $T<\infty$. As a direct consequence of above fact, we infer that
$$
\sup_{0\leq t\leq T}\|\nabla\theta (t)\|_{L^{\infty}}\leq
C(T,\,u_{0},\,\theta_{0})<\infty.
$$
As a consequence of the above estimate, it follows from the
vorticity equation (\ref{TTes3}) that for any $0\leq t\leq T$
$$\|\omega(t)\|_{L^{\infty}}\leq \|\omega_{0}\|_{L^{\infty}}
+\int_{0}^{t}{\|\nabla \theta(\tau)\|_{L^{\infty}}\,d \tau}\leq
C(T,\,u_{0},\,\theta_{0})<\infty.$$
This concludes the proof of Lemma \ref{L305}.
\end{proof}

\vskip .4in
\section{The proof of Theorem \ref{Th1}}\setcounter{equation}{0}
In this section we give the proof of Theorem \ref{Th1}. With the estimates (\ref{tNew007}) and (\ref{Vorticity}) at hand, the proof can be performed as the classical approach.
\begin{proof}[\textbf{Proof of Theorem \ref{Th1}}]
To begin with, we smooth the initial data to consider the following approximate system
\begin{equation}\label{APBouss}
\left\{\aligned
&\partial_{t}u^{(N)}+(u^{(N)} \cdot \nabla) u^{(N)}+\Lambda^{\alpha}u^{(N)}+\nabla p^{(N)}=\theta^{(N)} e_{2},\,\,\,\,\,\,\,\,x\in \mathbb{R}^{2},\,\,t>0, \\
&\partial_{t}\theta^{(N)}+(u^{(N)} \cdot \nabla) \theta^{(N)}+\Lambda^{\beta}\theta^{(N)}=0, \, \,\,\,\,\,\,\,\,\quad\quad\qquad\qquad x\in \mathbb{R}^{2},\,\,t>0,\\
&\nabla\cdot u^{(N)}=0, \,\,\,\,\,\,\,\qquad \qquad\qquad\qquad\qquad \qquad \qquad \qquad \quad x\in \mathbb{R}^{2},\,\,t>0,\\
&u^{(N)}(x, 0)=S_{N}u_{0}(x),  \quad \theta^{(N)}(x,0)=S_{N}\theta_{0}(x),\,\,\,\,\,\qquad\quad x\in \mathbb{R}^{2},
\endaligned\right.
\end{equation}
where $S_{N}$ is the low-frequency cut-off operator (see Appendix for it definition).

Now we apply
$(I+\Lambda)^{s}$ to
system (\ref{APBouss}) and multiply the resulting equations by
$(I+\Lambda)^{s}u^{(N)}$ and $(I+\Lambda)^{s}\theta^{(N)}$ respectively, add them up to to conclude that
\begin{eqnarray}\label{t316}
&&\frac{d}{dt}(\|u^{(N)}(t)\|_{H^{s}}^{2}+\|\theta^{(N)}(t)\|_{H^{s}}^{2})
+\|u^{(N)}\|_{H^{s+\frac{\alpha}{2}}}^{2}+\|\theta^{(N)}\|_{H^{s+\frac{\beta}{2}}}^{2}\nonumber\\
&\leq& C(1+\| \nabla u^{(N)}\|_{L^{\infty}}+\|\nabla \theta^{(N)}\|_{L^{\infty}})(\|{u}^{(N)}\|_{H^{s}}^{2}+\|\theta^{(N)}\|_{H^{s}}^{2})
\nonumber\\
&\leq&C(1+\|{u}^{(N)}\|_{H^{s}}+\|\theta^{(N)}\|_{H^{s}})
(\|{u}^{(N)}\|_{H^{s}}^{2}+\|\theta^{(N)}\|_{H^{s}}^{2}),
\end{eqnarray}
where we have used the embedding $H^{s}(\mathbb{R}^{2}))\hookrightarrow W^{1,\infty}(\mathbb{R}^{2}))$ for any $s>2$.\\
Therefore, there exists a time $$T^{\ast}:=C^{\ast}\Big(1+(\|u_{0}\|_{{H}^{s}}^{2}+\|\theta_{0}\|_{H^{s}}^{2})
\Big)^{-\frac{1}{2}}$$ for some absolute constant $C^{\ast}>0$ such that
$$u^{(N)}\in L^{\infty}([0, T^{\ast}); H^{s}(\mathbb{R}^{2}))\cap L^{2}([0, T^{\ast}); H^{s+\frac{\alpha}{2}}(\mathbb{R}^{2})),$$
$$\theta^{(N)}\in L^{\infty}([0, T^{\ast}); H^{s}(\mathbb{R}^{2}))\cap L^{2}([0, T^{\ast});
H^{s+\frac{\beta}{2}}(\mathbb{R}^{2})).$$
 Note that
$$\partial_{t}u^{(N)}=-\mathcal {P}\mathcal
((u^{(N)} \cdot \nabla) u^{(N)})-\Lambda^{\alpha}u^{(N)}+\mathcal {P}\mathcal
\theta^{(N)} e_{2},$$
$$\partial_{t}\theta^{(N)}=-(u^{(N)} \cdot \nabla) \theta^{(N)}-\Lambda^{\beta}\theta^{(N)},$$
where $\mathcal {P}$ denote the Leray projection onto
divergence-free vector fields.\\ Thus, it is not
hard to see that
$$\partial_{t}u^{(N)},\,\,\partial_{t}\theta^{(N)}\in L_{t}^{\infty}
([0, T^{\ast});\,H_{x}^{s-1}(\mathbb{R}^{2})).$$ Consequently, we assume that
$$\partial_{t}u^{(N)},\,\,\partial_{t}\theta^{(N)}\in L_{Loc}^{4}([0, T^{\ast});\,H_{x}^{s-1}(\mathbb{R}^{2})).$$
Since the embedding $H^{s}\hookrightarrow H^{s-1}$ is locally compact, the well-known Aubin-Lions argument and Cantor's diagonal
process, we conclude that there exists a solution satisfying
$$u \in L^{\infty}([0, T^{\ast}); H^{s}(\mathbb{R}^{2}))\cap L^{2}([0, T^{\ast}); H^{s+\frac{\alpha}{2}}(\mathbb{R}^{2})),$$
$$\theta \in L^{\infty}([0, T^{\ast}); H^{s}(\mathbb{R}^{2}))\cap L^{2}([0, T^{\ast});
H^{s+\frac{\beta}{2}}(\mathbb{R}^{2})).$$
The continuity of $u$ and $\theta$ in time, namely $u,\,\theta\in C([0, T^{\ast}); H^{s}(\mathbb{R}^{2}))$ can be obtained by a standard approach.
It suffices to consider $u\in C([0, T^{\ast}); H^{s}(\mathbb{R}^{2}))$ as the
same fashion can be applied to $\theta$ to obtain the desired result.\\
By the equivalent norm, it yields
\begin{align}\label{t2.03}
\|u(t_{1})-u(t_{2})\|_{H^{s}}=\Big\{(\sum_{j<N}+ \sum_{j\geq N})
(2^{js}\|\Delta_{j}u(t_{1})-\Delta_{j}u(t_{2})\|_{L^{2}})^{2}
\Big\}^{\frac{1}{2}},
\end{align}
 where $\Delta_{j}$ is the
non-homogeneous Littlewood-Paley operator (see Appendix for its definition).
Let $\varepsilon>0$ be arbitrarily small. Due to $u\in
L^{\infty}([0, T^{\ast}); H^{s}(\mathbb{R}^{2}))$, there exists a integer
$N>0$ such that
 \begin{align}\label{t2.04}
 \Big\{\sum_{j\geq N}
 (2^{js}\|\Delta_{j}u(t_{1})-\Delta_{j}u(t_{2})\|_{L^{2}})^{2}
 \Big\}^{\frac{1}{2}}<\frac{\varepsilon}{2}.
 \end{align}
Recalling the system $(\ref{Bouss})_{1}$, we obtain for $0\leq t_{1}<t_{2}<T^{\ast}$ that
\begin{eqnarray}\label{t2.05}
\Delta_{j}u(t_{1})-\Delta_{j}u(t_{2})&=&\int_{t_{1}}^{t_{2}}{\frac{d}{d\tau}
\Delta_{j}u(\tau)\,d\tau}\nonumber\\
&=&\int_{t_{1}}^{t_{2}}{ \Delta_{j}\mathcal {P}[\theta e_{2}-(u\cdot\nabla) u-\Lambda^{\alpha}u](\tau)\,d\tau}.
\end{eqnarray}
Therefore, we can get
\begin{eqnarray}\label{t2.06}
&&\sum_{j<N}
 2^{2js}\|\Delta_{j}u(t_{1})-\Delta_{j}u(t_{2})\|_{L^{2}}^{2}\nonumber\\
 &=&\sum_{j<N}
 2^{2js}\Big(\Big\|\int_{t_{1}}^{t_{2}}{ \Delta_{j}\mathcal {P}[\theta e_{2}-(u\cdot\nabla) u-\Lambda^{\alpha}u](\tau)\,d\tau}\Big\|_{L^{2}}\Big)^{2}\nonumber\\
&\leq&\sum_{j<N}
 2^{2js}\Big(\int_{t_{1}}^{t_{2}}{ [\|\Delta_{j}\theta\|_{L^{2}}+ \|\Delta_{j}(u\cdot\nabla u)\|_{L^{2}}+
\|\|\Delta_{j}\Lambda^{\alpha}u\|_{L^{2}}](\tau)\,d\tau}\Big)^{2} \nonumber\\
&=&\sum_{j<N}
 2^{2j}\Big(\int_{t_{1}}^{t_{2}}{ [2^{j(s-1)}\|\Delta_{j}\theta\|_{L^{2}}+ 2^{j(s-1)}\|\Delta_{j}(u\cdot\nabla u\|_{L^{2}}+
2^{j(s-1+\alpha)}\|\Delta_{j}u\|_{L^{2}}](\tau)\,d\tau}\Big)^{2}\nonumber\\
&\leq&C\sum_{j<N}
 2^{2j}\Big(\|\theta\|_{H^{s-1}}^{2}|t_{1}-t_{2}|+\|(u\cdot\nabla)
u\|_{H^{s-1}}^{2}|t_{1}-t_{2}|+\|u\|_{H^{s-1+\alpha}}^{2}|t_{1}-t_{2}|\Big)\nonumber\\
&\leq&C\sum_{j<N}
 2^{2j}|t_{1}-t_{2}|\Big(\|\theta\|_{H^{s-1}}^{2}+
\|u\|_{L^{\infty}}^{2}\|\nabla u\|_{H^{s-1}}^{2}+\|\nabla u\|_{L^{\infty}}^{2}\| u\|_{H^{s-1}}^{2}
+\|u\|_{H^{s}}^{2}\Big)\nonumber\\
&\leq&C
 2^{2N}|t_{1}-t_{2}|\Big(\|u\|_{H^{s}}^{4}+\| u\|_{H^{s}}^{2}
+\|\theta\|_{H^{s}}^{2}\Big),
\end{eqnarray}
where the Sobolev imbeddings $H^{s}(\mathbb{R}^{2})\hookrightarrow
H^{s-1}(\mathbb{R}^{2})$ and $H^{s-1}(\mathbb{R}^{2})\hookrightarrow
L^{\infty}(\mathbb{R}^{2})$  with $s>2$ are used several
times in the last inequality.\\
Thus, the following holds true
 \begin{align}\label{t2.07}
 \Big\{\sum_{j< N}
 (2^{js}\|\Delta_{j}u(t_{1})-\Delta_{j}u(t_{2})\|_{L^{2}})^{2}
 \Big\}^{\frac{1}{2}}<\frac{\varepsilon}{2}
 \end{align}
provided $|t_{1}-t_{2}|$ small enough.\\ Combining (\ref{t2.04}) with
(\ref{t2.07}) implies  $u\in C([0, T^{\ast}); H^{s}(\mathbb{R}^{2}))$.
Moreover, the uniqueness is clear since the velocity and the
temperature are both in Lipschitz spaces.

Now, it remains for us to show that the local smooth solutions may be extended to all positive time. It suffices to state
that under the assumption of the theorem and any given $T>0$, we have
$$\sup_{0\leq t\leq T}(\|{u}(t)\|_{H^{s}}^{2}+\|\theta(t)\|_{H^{s}}^{2})
+\int_{0}^{T}{\big(\|u(t)\|_{H^{s+\frac{\alpha}{2}}}^{2}+
\|\theta(t)\|_{H^{s+\frac{\beta}{2}}}^{2}\big)\,dt}\leq
C(T,\,u_{0},\,\theta_{0}).$$
In consequence, the energy estimate (\ref{t316}) ensures that
\begin{eqnarray}
&&\frac{d}{dt}(\|u(t)\|_{H^{s}}^{2}+\|\theta(t)\|_{H^{s}}^{2})
+\|u\|_{H^{s+\frac{\alpha}{2}}}^{2}+\|\theta\|_{H^{s+\frac{\beta}{2}}}^{2}\nonumber\\
&\leq& C(1+\| \nabla u\|_{L^{\infty}}+\|\nabla \theta\|_{L^{\infty}})(\|{u}\|_{H^{s}}^{2}+\|\theta\|_{H^{s}}^{2}).\nonumber
\end{eqnarray}
To obtain the global existence of smooth solutions, the standard procedure is to bound the term $\| \nabla u\|_{L^{\infty}}$ with $\|\omega\|_{L^{\infty}}$ and a Sobolev extrapolation inequality with logarithmic correction (see e.g., \cite{BKM,BG})
$$\|\nabla u\|_{L^{\infty}(\mathbb{R}^{2})} \leq C\Big(1+\|u\|_{L^{2}(\mathbb{R}^{2})}+
\|\omega\|_{L^{\infty}(\mathbb{R}^{2})} \ln(e+\|
u\|_{H^{s}(\mathbb{R}^{2})})\Big),\quad s>2.$$
Consequently, it enables us to get
\begin{eqnarray}
&&\frac{d}{dt}(\|u(t)\|_{H^{s}}^{2}+\|\theta(t)\|_{H^{s}}^{2})
+\|u\|_{H^{s+\frac{\alpha}{2}}}^{2}+\|\theta\|_{H^{s+\frac{\beta}{2}}}^{2}\nonumber\\
&\leq& C(1+\|\omega\|_{L^{\infty}}+\|\nabla \theta\|_{L^{\infty}})\ln(e+\|
u\|_{H^{s}}+\|\theta\|_{H^{s}}^{2})(\|{u}\|_{H^{s}}^{2}+\|\theta\|_{H^{s}}^{2}).
\end{eqnarray}
Applying the log-Gronwall type inequality  as well as the estimates (\ref{tNew007}) and (\ref{Vorticity}), we eventually obtain the desired estimates.
This concludes the proof of Theorem \ref{Th1}.
\end{proof}

\vskip .3in
\appendix
\section{The proof of Lemmas \ref{NCE} and \ref{Lem23}}
Before proving Lemmas \ref{NCE} and \ref{Lem23}, we first recall the
so-called Littlewood-Paley operators and their elementary properties
which allow us to define the Besov spaces (see for example
\cite{BCD,BL,MWZ2012,Tri}). It will be also convenient to introduce some function
spaces and review some well-known facts.

Let $(\chi, \varphi)$ be a couple of smooth
functions with values in $[0, 1]$ such that $\chi\in
C_{0}^{\infty}(\mathbb{R}^{n})$ is supported in the ball
$\mathcal{B}:= \{\xi\in \mathbb{R}^{n}, |\xi|\leq \frac{4}{3}\}$,
$\rm{\varphi\in C_{0}^{\infty}(\mathbb{R}^{n})}$ is supported in the annulus
$\mathcal{C}:= \{\xi\in \mathbb{R}^{n}, \frac{3}{4}\leq |\xi|\leq
\frac{8}{3}\}$ and satisfy
$$\chi(\xi)+\sum_{j\in \mathbb{N}}\varphi(2^{-j}\xi)=1, \quad  \forall \xi\in \mathbb{R}^{n},\quad\sum_{j\in \mathbb{Z}}\varphi(2^{-j}\xi)=1,  \  \forall  \xi \in \mathbb{R}^{n}\setminus \{0 \}.$$
For every $ u\in S'$ (tempered distributions)  we define the
non-homogeneous Littlewood-Paley operators as follows,
$$\Delta_{j}u=0\ j\leq -2; \ \quad \Delta_{-1}u=\chi(D)u; \ \quad\forall j\in \mathbb{N}, \ \quad\Delta_{j}u=\varphi(2^{-j}D)u.
$$We shall also denote
$$\ S_{j}u:=\sum_{-1\leq k\leq j-1} \Delta_{k}u,\qquad \widetilde{\Delta}_{j}
u:=\Delta_{j-1}u+\Delta_{j}u+\Delta_{j+1}u.$$
We now point
out several simple facts concerning the operators $\Delta_{j}$: By
compactness of the supports of the series of Fourier transform, we
have $$\Delta_{j}\Delta_{l}u\equiv0, \quad |j-l|\geq2 \quad and
\quad \Delta_{k}(S_{l}u\Delta_{l}v)\equiv0 \quad |k-l|\geq5.$$ for
any $u$ and $v$.
Moreover, it is easy to check that
$$\mbox{supp}\,  \mathcal{F}\big( {S}_{j-1}u {\Delta}_{j}v \big)\approx
\Big\{\xi\,\,|\,\, \frac{1}{12}2^{j}\leq |\xi|\leq
\frac{10}{3}2^{j}\Big\},$$
$$\mbox{supp}\, \mathcal{F}\big(\widetilde{ {\Delta}}_{j}u {\Delta}_{j}v \big)
\subset \Big\{\xi\,\,|\,\, |\xi|\leq 8 \times 2^{j}\Big\},$$ where
$\mathcal{F}$ denotes the Fourier transform and $A\approx B$ to
denote $C^{-1} B\leq A\leq C B$ for some positive
constant $C$.\\
Let us recall Let us recall the definition of homogeneous and
inhomogeneous Besov spaces through the dyadic decomposition.
\begin{define}
Let $s\in \mathbb{R}, (p,r)\in[1,+\infty]^{2}$. The inhomogeneous
Besov space $B_{p,r}^{s}$ are defined as a space of $f\in
S'(\mathbb{R}^{n})$ such that
$$ B_{p,r}^{s}=\{f\in S'(\mathbb{R}^{n}),  \|f\|_{B_{p,r}^{s}}<\infty\},$$
where
\begin{equation}\label{1}\nonumber
 \|f\|_{B_{p,r}^{s}}=\left\{\aligned
&\Big(\sum_{j\geq-1}2^{jrs}\|\Delta_{j}f\|_{L^{p}}^{r}\Big)^{\frac{1}{r}}, \quad \forall \ r<\infty,\\
&\sup_{j\geq-1}
2^{js}\|\Delta_{j}f\|_{L^{p}}, \quad \forall \ r=\infty.\\
\endaligned\right.
\end{equation}
\end{define}
Many frequently used function spaces are special cases of Besov spaces.
For $s\in \mathbb{R},(p,r)\in[1,+\infty]^{2}$, we have the following fact
$$\|f\|_{ {B}_{2,2}^{s}}\approx\|f\|_{H^{s}}.$$
For any $s\in \mathbb{R}$ and $1<q<\infty$,
$$
B^{s}_{q,\min\{q,2\}} \hookrightarrow W^{s,\,q} \hookrightarrow B^{s}_{q,\max\{q,2\}}.
$$

Bernstein inequalities are fundamental in the analysis involving
Besov spaces and these inequalities trade integrability for
derivatives.
\begin{lemma} [see \cite{BCD}]\label{lem22}
Let $k\in \mathbb{N}\cup\{0\}, 1\leq a\leq b\leq\infty$. Assume
$k=|\alpha|$, then there exist positive constants $C_1$ and $C_2$
independent of $j$ and $f$ only such that
$$
\mbox{supp}\, \widehat{f} \subset \{\xi\in \mathbb{R}^n: \,\, |\xi|
\lesssim  2^j \} \Rightarrow \|\partial^{\alpha} f\|_{L^b} \le C_1\,
2^{j k  + jn(\frac{1}{a}-\frac{1}{b})} \|f\|_{L^a};
$$
$$
\mbox{supp}\, \widehat{f} \subset \{\xi\in \mathbb{R}^n: \,\,|\xi|
\thickapprox 2^j \} \Rightarrow C_1\, 2^{ j k} \|f\|_{L^b } \le
\|\partial^{\alpha} f\|_{L^b } \le C_2\, 2^{  j k + j
n(\frac{1}{a}-\frac{1}{b})} \|f\|_{L^a}.
$$
Here we use $A\lesssim B$ to denote $A\leq C B$ for some positive
constant $C$.
\end{lemma}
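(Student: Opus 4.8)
The plan is to deduce the three estimates from Young's convolution inequality, exploiting the dictionary between frequency truncation at scale $2^j$ and dilation by $2^j$ in physical space. I would fix, once and for all, a function $\Phi\in C_0^\infty(\mathbb{R}^n)$ with $\Phi\equiv 1$ on $\{|\xi|\le\frac43\}$ and an annular bump $\widetilde\varphi\in C_0^\infty(\mathbb{R}^n)$, supported away from the origin, with $\widetilde\varphi\equiv1$ on $\{\frac34\le|\xi|\le\frac83\}$; both $\mathcal{F}^{-1}\Phi$ and $\mathcal{F}^{-1}\widetilde\varphi$ are Schwartz functions, and this is essentially the only input beyond Young's inequality.

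For the first estimate, since $\widehat f$ is supported in $\{|\xi|\lesssim 2^j\}$ we have $\Phi(2^{-j}\xi)\widehat f=\widehat f$, hence $f=2^{jn}(\mathcal{F}^{-1}\Phi)(2^j\cdot)\ast f$; differentiating under the convolution, $\partial^\alpha f=2^{j(n+k)}\bigl[(\partial^\alpha\mathcal{F}^{-1}\Phi)(2^j\cdot)\bigr]\ast f$. Applying Young's inequality with $\frac1r=1+\frac1b-\frac1a\in[0,1]$, together with the scaling identity $\|2^{jn}g(2^j\cdot)\|_{L^r}=2^{jn(1-1/r)}\|g\|_{L^r}=2^{jn(1/a-1/b)}\|g\|_{L^r}$, gives
$$\|\partial^\alpha f\|_{L^b}\le 2^{jk+jn(1/a-1/b)}\,\|\partial^\alpha\mathcal{F}^{-1}\Phi\|_{L^r}\,\|f\|_{L^a},$$
and $\|\partial^\alpha\mathcal{F}^{-1}\Phi\|_{L^r}$ is a finite constant depending only on $n,k,a,b$. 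The upper bound in the second display is identical, with $\widetilde\varphi$ replacing $\Phi$, using that $\widetilde\varphi(2^{-j}\cdot)\equiv1$ on the support of $\widehat f$ in the annular case.

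For the lower bound in the annular case I would first observe that it is really a statement about $\Lambda^k f$ — equivalently, after summing, about $\sum_{|\beta|=k}\|\partial^\beta f\|_{L^b}$ — since the symbol $\xi^\alpha$ of a single fixed derivative vanishes on part of the annulus whereas $|\xi|^k\approx2^{jk}$ stays bounded below there, which is what makes frequency-localized inversion possible. Choosing $\Psi\in C_0^\infty(\mathbb{R}^n)$, supported away from the origin, with $\Psi(\xi)=|\xi|^{-k}$ on the support of $\widetilde\varphi$, one has $f=2^{-jk}\Psi(2^{-j}D)\Lambda^k f=2^{-jk}2^{jn}(\mathcal{F}^{-1}\Psi)(2^j\cdot)\ast\Lambda^k f$, and Young's inequality with the dilation-invariant identity $\|2^{jn}g(2^j\cdot)\|_{L^1}=\|g\|_{L^1}$ yields $\|f\|_{L^b}\le C\,2^{-jk}\|\Lambda^k f\|_{L^b}$, i.e. $\|\Lambda^k f\|_{L^b}\ge C^{-1}2^{jk}\|f\|_{L^b}$.

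The proof is routine, so there is no single hard obstacle; the point that requires care is the bookkeeping — that dilating a kernel by $2^j$ in $\mathbb{R}^n$ costs exactly $2^{jn(1-1/r)}$ in $L^r$ (which is what produces the factor $2^{jn(1/a-1/b)}$) and that each derivative costs a factor $2^j$ — together with the interpretive remark above, namely that the annular lower bound concerns $\Lambda^k$ (or the supremum over $|\beta|=k$) rather than an arbitrary fixed derivative.
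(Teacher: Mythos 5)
The paper offers no proof of this lemma at all: it is quoted verbatim from the textbook reference \cite{BCD}, so there is nothing in-paper to compare against. Your argument is the standard dilation-plus-Young proof and it is correct. The two upper bounds, obtained from $f=2^{jn}(\mathcal{F}^{-1}\Phi)(2^{j}\cdot)\ast f$ (resp.\ the annular bump $\widetilde\varphi$), differentiation under the convolution, and Young's inequality with $\frac1r=1+\frac1b-\frac1a$, are exactly the classical argument, and your bookkeeping of the dilation factors $2^{jk}$ and $2^{jn(1/a-1/b)}$ is right (the only cosmetic point is that $\Phi$ should be taken $\equiv 1$ on the ball $\{|\xi|\le C\}$ matching the implicit constant in $\lesssim$, not specifically $4/3$). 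You were also right to flag that the reverse inequality cannot hold for a single fixed $\partial^{\alpha}$, since $\xi^{\alpha}$ vanishes on part of the annulus; the correct reading, as in \cite{BCD}, involves $\sup_{|\beta|=k}\|\partial^{\beta}f\|_{L^{b}}$ or $\Lambda^{k}f$, and your inversion $f=2^{-jk}\Psi(2^{-j}D)\Lambda^{k}f$ with Young in $L^{1}\ast L^{b}$ cleanly gives $c\,2^{jk}\|f\|_{L^{b}}\le\|\Lambda^{k}f\|_{L^{b}}$. The one step you leave implicit is the return from $\Lambda^{k}$ to the derivatives: either add the one-line multiplier bound $\|\Lambda^{k}f\|_{L^{b}}\le C\sum_{|\beta|=k}\|\partial^{\beta}f\|_{L^{b}}$ for annulus-supported $f$ (apply the same Young-plus-scaling argument to the symbols $\frac{k!}{\beta!}\,|\xi|^{k}(-i\xi)^{\beta}|\xi|^{-2k}\widetilde\varphi(2^{-j}\xi)$, using the identity $|\xi|^{2k}=\sum_{|\beta|=k}\frac{k!}{\beta!}\xi^{2\beta}$), or bypass $\Lambda^{k}$ altogether by writing $\widehat f=\sum_{|\beta|=k}\frac{k!}{\beta!}\frac{(-i\xi)^{\beta}\widetilde\varphi(2^{-j}\xi)}{|\xi|^{2k}}\widehat{\partial^{\beta}f}$, whose kernels have $L^{1}$ norm $\le C2^{-jk}$, which yields $2^{jk}\|f\|_{L^{b}}\le C\sup_{|\beta|=k}\|\partial^{\beta}f\|_{L^{b}}$ directly. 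With that line added the proof is complete and coincides with the one in \cite{BCD}.
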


To prove Lemmas \ref{NCE} and \ref{Lem23}, the following lemma will
be used extensively.
\begin{Pros}\label{ProsA1}
Given $(p_{1},\,p_{2})\in [2, \infty]^{2}$ and $p\in[2,\,\infty)$
such that $\frac{1}{p}=\frac{1}{p_{1}}+\frac{1}{p_{2}}$. Let f, g
and h be three functions such that $\nabla f\in L^{p_{1}}$, $g\in
L^{p_{2}}$ and $xh\in L^{1}$. Then it holds
$$\|h\star(fg)-f(h\star g)\|_{L^{p}}\leq \|xh\|_{L^{1}}
\|\nabla f\|_{L^{p_{1}}}\|g\|_{L^{p_{2}}},$$ where $\star$ stands
for the convolution symbol.
\end{Pros}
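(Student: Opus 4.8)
The plan is to rewrite the commutator $h\star(fg)-f(h\star g)$ as a single integral in which the difference $f(x-y)-f(x)$ appears explicitly, then replace this difference by a line integral of $\nabla f$, and finally close the estimate using Minkowski's inequality for integrals together with H\"older's inequality and translation invariance.

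First I would unwind the definition of convolution and pull the factor $f(x)$ into the integral, obtaining, for $x\in\mathbb{R}^{n}$,
\[
\big(h\star(fg)\big)(x)-f(x)\big(h\star g\big)(x)=\int_{\mathbb{R}^{n}}h(y)\big[f(x-y)-f(x)\big]g(x-y)\,dy .
\]
Then, using the fundamental theorem of calculus in the form $f(x-y)-f(x)=-\int_{0}^{1}y\cdot(\nabla f)(x-ty)\,dt$, the commutator becomes
\[
-\int_{0}^{1}\!\!\int_{\mathbb{R}^{n}}h(y)\,\big(y\cdot(\nabla f)(x-ty)\big)\,g(x-y)\,dy\,dt .
\]

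Next I would take the $L^{p}$ norm in $x$ and apply Minkowski's inequality for integrals to move the norm inside the $dy\,dt$ integration; after that, H\"older's inequality in $x$ with the exponents $p_{1},p_{2}$ (legitimate since $\tfrac1p=\tfrac1{p_{1}}+\tfrac1{p_{2}}$) and the translation invariance of the Lebesgue norm give $\|(\nabla f)(\cdot-ty)\,g(\cdot-y)\|_{L^{p}_{x}}\le\|\nabla f\|_{L^{p_{1}}}\|g\|_{L^{p_{2}}}$ uniformly in $t$ and $y$. Carrying out the remaining integrations, $\int_{0}^{1}dt=1$ and $\int_{\mathbb{R}^{n}}|y|\,|h(y)|\,dy=\|xh\|_{L^{1}}$, then yields
\[
\big\|h\star(fg)-f(h\star g)\big\|_{L^{p}}\leq\|xh\|_{L^{1}}\|\nabla f\|_{L^{p_{1}}}\|g\|_{L^{p_{2}}},
\]
which is exactly the claimed inequality.

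The only point requiring care is the justification of Fubini's theorem and of Minkowski's integral inequality — that is, measurability of the integrand and finiteness of the relevant iterated integral so that the norm may indeed be pushed inside. To keep this clean I would first establish the estimate for smooth, rapidly decaying $f,g,h$ (e.g.\ Schwartz functions), where these manipulations are unproblematic, and then pass to general $f,g,h$ with $\nabla f\in L^{p_{1}}$, $g\in L^{p_{2}}$, $xh\in L^{1}$ by a standard density argument, the right-hand side being already finite. I do not anticipate any genuine difficulty here; verifying the hypotheses of the integral Minkowski inequality is the main (and quite mild) technical point, after which the bound follows from H\"older's inequality and translation invariance alone.
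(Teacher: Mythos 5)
Your argument is correct and is essentially the same as the paper's: both write the commutator as an integral against $h$, express $f(x-y)-f(x)$ via the fundamental theorem of calculus as a line integral of $\nabla f$, and conclude by Minkowski's integral inequality, H\"older's inequality with $\tfrac1p=\tfrac1{p_1}+\tfrac1{p_2}$, and translation invariance. The only difference is cosmetic: the paper performs an extra change of variables $z=t(x-y)$ before applying Minkowski--H\"older, which your direct use of translation invariance renders unnecessary.
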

\begin{proof}[\textbf{Proof of Proposition \ref{ProsA1}}]
We remark that Proposition \ref{ProsA1} with $p_{1}=p $ and $ p_{2}=\infty$ has been proven in \cite{HK4}. The interested reader may refer to \cite{JMWZ} for general case. To facilitate the reader, we give the details.

By direct calculation, one may easily show that
\begin{eqnarray}
h\star(fg)(x)-f(h\star g)(x)&=&\int_{\mathbb{R}^{2}}{h(x-y)g(y)(f(y)-f(x))\,dy}\nonumber\\
&=&\int_{\mathbb{R}^{2}}\int_{0}^{1}{h(x-y)g(y)(y-x).(\nabla f)(x+(y-x)t)\,dy dt}\nonumber\\
&=&\int_{\mathbb{R}^{2}}\int_{0}^{1}{h\Big(\frac{z}{t}\Big)g\Big(x-\frac{z}{t}\Big)\frac{z}{t}.(\nabla f)(x-z)\frac{1}{t^{2}}\,dz dt}.\nonumber
\end{eqnarray}
According to the Minkowski inequality and  the H\"{o}lder inequality, one has
\begin{eqnarray}
\|h\star(fg)-f(h\star g)\|_{L^{p}}&\leq&\int_{\mathbb{R}^{2}}
\int_{0}^{1}{h\Big(\frac{z}{t}\Big)\frac{z}{t^{3}}\|\nabla f\|_{L_{x}^{p_{1}}}\|g\|_{L_{x}^{p_{2}}} \,d z dt}\nonumber\\
&\leq& \|xh\|_{L^{1}}
\|\nabla f\|_{L^{p_{1}}}\|g\|_{L^{p_{2}}},
\end{eqnarray}
which is nothing but the desired result.
\end{proof}

Now let us proceed to prove Lemma \ref{NCE}.
To start, we use Bony's decomposition to present the commutator as
\begin{eqnarray}\label{A001}
\Delta_{k}[\Lambda^{\delta},f]g&=&
\sum_{|j-k|\leq 4}\Delta_{k}\Big([\Lambda^{\delta},\,S_{j-1}f]
\Delta_{j}g\Big)+\sum_{|j-k|\leq 4}\Delta_{k}
\Big([\Lambda^{\delta},\,\Delta_{j}f]S_{j-1}g\Big)\nonumber\\&&+
\sum_{j-k \geq -4}\Delta_{k}\Big([\Lambda^{\delta},\,\Delta_{j}f]\widetilde
{\Delta}_{j}g\Big)\nonumber\\&:=&
N_{1}+N_{2}+N_{3}.
\end{eqnarray}
Now we recall the following fact. Let $\mathcal{A}$ bet an annulus centered at the origin. Then for every $F$ with spectrum supported on $2^{j}\mathcal{A}$, there exists $\eta\in \mathcal{S}(\mathbb{R}^{n})$ whose Fourier transform supported away from the origin, such that
$$\Lambda^{\delta}F=2^{j(n+\delta)}\eta(2^{j}.)\star F.$$
For fixed $k$, the summation
over $|j-k|\leq 4$ involves only a finite number of $j's$. For the sake of brevity, we shall
replace the summations by their representative term with $j=k$ in $N_{1}$ and $N_{2}$.
In view of the above fact, Berstein's lemma and Proposition \ref{ProsA1}, we thus get
\begin{eqnarray}\label{A002}\|N_{1}\|_{L^{p}}
&\leq&C\|x 2^{k(n+\delta)}\eta(2^{k}x)\|_{L^{1}}\|\nabla
S_{k-1}f\|_{L^{p_{1}}} \|\Delta_{k}g\|_{L^{p_{2}}}
\nonumber\\&\leq&C2^{k(\delta-1)}\|\nabla f\|_{L^{p_{1}}}
\|\Delta_{k}g\|_{L^{p_{2}}}.
\end{eqnarray}
Similarly, one can also deduce that
\begin{eqnarray}\label{A003}\|N_{2}\|_{L^{p}}
&\leq&C\|x 2^{k(n+\delta)}\eta(2^{k}x)\|_{L^{1}}\|\Delta_{k}\nabla
f\|_{L^{p_{1}}} \|S_{k-1}g\|_{L^{p_{2}}} \nonumber\\&\leq&
C2^{k(\delta-1)}\|\nabla f\|_{L^{p_{1}}} \sum_{l\leq
k-2}\|\Delta_{l}g\|_{L^{p_{2}}} \nonumber\\&\leq& C\|\nabla
f\|_{L^{p_{1}}} \sum_{l\leq
k-2}2^{(k-l)(\delta-1)}2^{l(\delta-1)}\|\Delta_{l}g\|_{L^{p_{2}}}.
\end{eqnarray}
Finally, the last term $N_{3}$ can be rewritten as
\begin{eqnarray}\label{A004}N_{3}&=&\sum_{j-k \geq -4}\Delta_{k}
\Big(\Lambda^{\delta}(\Delta_{j}f
\widetilde{\Delta}_{j}g)-\Delta_{j}f
\Lambda^{\delta}\widetilde{\Delta}_{j}g\Big)\nonumber\\
&=&\sum_{j-k \geq -4,\,\,j\geq0}\Delta_{k}
\Big(\Lambda^{\delta}(\Delta_{j}f
\widetilde{\Delta}_{j}g)-\Delta_{j}f
\Lambda^{\delta}\widetilde{\Delta}_{j}g\Big)\nonumber\\&&+
\sum_{-1-k \geq -4}\Delta_{k} \Big(\Lambda^{\delta}(\Delta_{-1}f
\widetilde{\Delta}_{-1}g)-\Delta_{-1}f
\Lambda^{\delta}\widetilde{\Delta}_{-1}g\Big)\nonumber\\
&:=&N_{31}+N_{32}
\end{eqnarray}
By Berstein's lemma, the term $N_{31}$ can be bounded without using
commutator structure
\begin{eqnarray}\label{A005}\|N_{31}\|_{L^{p}}
&\leq& C\sum_{j-k \geq -4,\,\,j\geq0}\Big(\Big\|\Delta_{k}
\big(\Lambda^{\delta}(\Delta_{j}f
\widetilde{\Delta}_{j}g)\big)\Big\|_{L^{p}}+\Big\|\Delta_{k}
\big(\Delta_{j}f
\Lambda^{\delta}\widetilde{\Delta}_{j}g\big)\Big\|_{L^{p}}\Big)
\nonumber\\
&\leq& C\sum_{j-k \geq
-4,\,\,j\geq0}\Big(\Big\|\Lambda^{\delta}(\Delta_{j}f
\widetilde{\Delta}_{j}g)\Big\|_{L^{p}}+\Big\|\Delta_{j}f
\Lambda^{\delta}\widetilde{\Delta}_{j}g\Big\|_{L^{p}}\Big)
\nonumber\\
&\leq& C\sum_{j-k \geq
-4,\,\,j\geq0}2^{j(\delta-1)}\|\Delta_{j}\nabla f\|_{L^{p_{1}}}
\|\Delta_{j}g\|_{L^{p_{2}}}
\nonumber\\
&\leq& C\sum_{j-k \geq -4}2^{j(\delta-1)}\|\nabla f\|_{L^{p_{1}}}
\|\Delta_{j}g\|_{L^{p_{2}}}.
\end{eqnarray}
Resorting Berstein's lemma again, the term $N_{32}$ admits the
following bound
\begin{eqnarray}\|N_{32}\|_{L^{p}}&\leq&
\sum_{-1\leq k \leq 3}\|\Delta_{k} \big(\Lambda^{\delta}(\Delta_{-1}f
\widetilde{\Delta}_{-1}g)\big)\|_{L^{p}}+\sum_{-1\leq k \leq 3}\|\Delta_{k} \big(\Delta_{-1}f
\Lambda^{\delta}\widetilde{\Delta}_{-1}g\big)\|_{L^{p}}
\nonumber\\
&\leq& C\sum_{-1\leq k \leq 3}(2^{k\delta}+1)\|\Delta_{-1}f\|_{L^{2p}}\|
\widetilde{\Delta}_{-1}g\|_{L^{2p}}
\nonumber\\
&\leq& C\chi_{\{-1\leq k \leq 3\}}\|f\|_{L^{2}}\|g\|_{L^{2}}.
\end{eqnarray}
By the definition of $B_{p,r}^{s}$, we have
\begin{eqnarray}\label{A006}
\|[\Lambda^{\delta},f]g\|_{B_{p,r}^{s}} &\leq& \big\|2^{ks}\|N_{1}\|_{L^{p}}\big\|_{l_{k}^{r}}+\big\|2^{ks}\|N_{2}\|_{L^{p}}\big\|_{l_{k}^{r}}
+\big\|2^{ks}\|N_{3}\|_{L^{p}}\big\|_{l_{k}^{r}}\nonumber\\
&\leq& C\|\nabla f\|_{L^{p_{1}}}\big\|2^{k(s+\delta-1)}
\|\Delta_{k}g\|_{L^{p_{2}}}\big\|_{l_{k}^{r}}\nonumber\\&&+C\|\nabla
f\|_{L^{p_{1}}}\Big\|\sum_{l\leq
k-2}2^{(k-l)(s+\delta-1)}2^{l(s+\delta-1)}\|\Delta_{l}g\|_{L^{p_{2}}}\Big\|_{l_{k}^{r}}
\quad (s+\delta-1<0)\nonumber\\&&+C\|\nabla f\|_{L^{p_{1}}}\Big
\|\sum_{j-k \geq -4}2^{(k-j)s}2^{j(s+\delta-1)}
\|\Delta_{j}g\|_{L^{p_{2}}} \Big\|_{l_{k}^{r}}\quad
(s>0)\nonumber\\&&+C \|f\|_{L^{2}}\|g\|_{L^{2}}
\nonumber\\
&\leq& C(p,r,\delta,s)(\|\nabla f\|_{L^{p_{1}}}\|g\|_{
{B}_{p_{2},r}^{s+\delta-1}}+\|f\|_{L^{2}}\|g\|_{L^{2}}).
\end{eqnarray}
Therefore, this concludes the proof of Lemma \ref{NCE}.

\vskip .2in
The proof of Lemma \ref{Lem23} is very similar to that of Lemma
\ref{NCE}. Indeed, we can regard the operator $\mathcal {R}_{\beta}$
as the operator $\Lambda^{1-\beta}$ without any difference. Now we
just view $f=u$, $g=\nabla\theta$ and $\delta=1-\beta$.
 Therefore, we have
\begin{eqnarray}
\Delta_{k}[\mathcal {R}_{\beta},\,u\cdot\nabla]\theta&=&
\sum_{|j-k|\leq 4}\Delta_{k}\Big([\mathcal
{R}_{\beta},\,S_{j-1}u\cdot\nabla]
\Delta_{j}\theta\Big)+\sum_{|j-k|\leq 4}\Delta_{k} \Big([\mathcal
{R}_{\beta},\,\Delta_{j}u\cdot\nabla]S_{j-1}\theta\Big)\nonumber\\&&+
\sum_{j-k \geq -4}\Delta_{k}\Big([\mathcal
{R}_{\beta},\,\Delta_{j}u\cdot\nabla]\widetilde
{\Delta}_{j}\theta\Big)\nonumber\\&:=&
\widetilde{N}_{1}+\widetilde{N}_{2}+\widetilde{N}_{3}.\nonumber
\end{eqnarray}
The same as $N_{1}$ and $N_{2}$, we can conclude
\begin{eqnarray}\|\widetilde{N}_{1}\|_{L^{p}}
\leq C2^{k(1-\beta)}\|\nabla u\|_{L^{p_{1}}}
\|\Delta_{k}\theta\|_{L^{p_{2}}}\nonumber
\end{eqnarray}
and
\begin{eqnarray}\|\widetilde{N}_{2}\|_{L^{p}}
\leq C\|\nabla u\|_{L^{p_{1}}} \sum_{l\leq
k-2}2^{(l-k)\beta}2^{l(1-\beta)}\|\Delta_{l}\theta\|_{L^{p_{2}}}.\nonumber
\end{eqnarray}
However, we need to deal with the term $\widetilde{N}_{3}$
differently. We rewrite $\widetilde{N}_{3}$ as
\begin{eqnarray}\widetilde{N}_{3}&=&\sum_{j-k \geq -4}\Delta_{k}\Big([\mathcal
{R}_{\beta},\,\Delta_{j}u\cdot\nabla]\widetilde
{\Delta}_{j}\theta\Big)\nonumber\\&=& \sum_{j-k \geq
-4,\,j\geq0}\Delta_{k}\Big(\mathcal
{R}_{\beta}(\Delta_{j}u\cdot\nabla\widetilde
{\Delta}_{j}\theta)-\Delta_{j}u\cdot\nabla\mathcal
{R}_{\beta}\widetilde
{\Delta}_{j}\theta\Big)\nonumber\\&&-\sum_{-1-k \geq
-4}\Delta_{k}\Big(\mathcal
{R}_{\beta}(\Delta_{-1}u\cdot\nabla\widetilde
{\Delta}_{-1}\theta)-\Delta_{-1}u\cdot\nabla\mathcal
{R}_{\beta}\widetilde {\Delta}_{-1}\theta\Big)\nonumber\\
&:=&\widetilde{N}_{31}+\widetilde{N}_{32}.\nonumber
\end{eqnarray}
By Berstein's lemma and the divergence-free condition
\begin{eqnarray}\|\widetilde{N}_{31}\|_{L^{p}}
&\leq& C\sum_{j-k \geq -4,\,\,j\geq0}\Big(\Big\|\Delta_{k}\big(\mathcal
{R}_{\beta}(\Delta_{j}u\cdot\nabla\widetilde
{\Delta}_{j}\theta)\big)\Big\|_{L^{p}}+\Big\|\Delta_{k}\big(\Delta_{j}u\cdot\nabla\mathcal
{R}_{\beta}\widetilde
{\Delta}_{j}\theta\big)\Big\|_{L^{p}}\Big)
\nonumber\\
&\leq& C\sum_{j-k \geq
-4,\,\,j\geq0}\Big(\Big\|\Delta_{k}\big(\mathcal
{R}_{\beta}\nabla\cdot(\Delta_{j}u \widetilde
{\Delta}_{j}\theta)\big)\Big\|_{L^{p}}+\Big\|\Delta_{k}\nabla\cdot\big(\Delta_{j}u \mathcal
{R}_{\beta}\widetilde
{\Delta}_{j}\theta\big)\Big\|_{L^{p}}\Big)
\nonumber\\
&\leq& C\sum_{j-k \geq
-4,\,\,j\geq0}2^{(k-j)(2-\beta)}\| \nabla u\|_{L^{p_{1}}}
2^{(1-\beta)j}\|\Delta_{j}\theta\|_{L^{p_{2}}}\nonumber\\&&+C\sum_{j-k \geq
-4,\,\,j\geq0}2^{k-j}\| \nabla u\|_{L^{p_{1}}}
2^{(1-\beta)j}\|\Delta_{j}\theta\|_{L^{p_{2}}},\nonumber
\end{eqnarray}
and
\begin{eqnarray}\|\widetilde{N}_{32}\|_{L^{p}}
\leq C\chi_{\{-1\leq k \leq 3\}}\|u\|_{L^{r}}\|\theta\|_{L^{2}}.
\end{eqnarray}
Putting all the above estimates together and making use of the Young inequality for series convolution yield
\begin{eqnarray}
\|[\mathcal {R}_{\beta},\,u\cdot\nabla]\theta\|_{L^{p}}&\leq& \sum_{k\geq-1}\|\Delta_{k}[\mathcal {R}_{\beta},\,u\cdot\nabla]\theta\|_{L^{p}}
\nonumber\\
&\leq& C\|\nabla u\|_{L^{p_{1}}}\sum_{k\geq-1}2^{k(1-\beta)}
\|\Delta_{k}\theta\|_{L^{p_{2}}}\nonumber\\&&+C\|\nabla u\|_{L^{p_{1}}} \sum_{k\geq-1}\sum_{l\leq
k-2}2^{(l-k)\beta}2^{l(1-\beta)}\|\Delta_{l}\theta\|_{L^{p_{2}}}
\nonumber\\&&+
C\| \nabla u\|_{L^{p_{1}}}\sum_{k\geq-1}\sum_{j-k \geq
-4,\,\,j\geq0}2^{(k-j)(2-\beta)}
2^{(1-\beta)j}\|\Delta_{j}\theta\|_{L^{p_{2}}}\nonumber\\&&+C\| \nabla u\|_{L^{p_{1}}}\sum_{k\geq-1}\sum_{j-k \geq
-4,\,\,j\geq0}2^{k-j}
2^{(1-\beta)j}\|\Delta_{j}\theta\|_{L^{p_{2}}}\nonumber\\&&+C\sum_{k\geq-1}\chi_{\{-1\leq k \leq 3\}}\|u\|_{L^{r}}\|\theta\|_{L^{2}}
\nonumber\\
&\leq&
C\|\nabla
u\|_{L^{p_{1}}}\|\theta\|_{B_{p_{2},1}^{1-\beta}}+
\|u\|_{L^{r}}\|\theta\|_{L^{2}}.
\end{eqnarray}
This completes the proof of Lemma \ref{Lem23}.

\vskip .3in
\section*{Acknowledgements}
Both authors would like to thank Dr. Liutang Xue for his valuable comments and stimulating discussions. Special thanks also go to Prof. Jiahong Wu for his interest and kind suggestions.

\vskip .4in

\end{document}